\newcommand{\noopsort}[1]{}
\newtheorem{theorem}{Theorem}[section]
\newtheorem{cor}[theorem]{Corollary}
\newsavebox{\Prfref}
\newsavebox{\prfref}
\newtheoremstyle{ref}% an optional reference is inserted after the theorem number
{\topsep}	%      Space above
{\topsep}	%      Space below
{\it}%         		Body font
{}%         		Indent amount (empty = no indent, \parindent = para indent)
{}%			Thm head font
{}%        		Punctuation after thm head
{ }%			Space after thm head: " " = normal interword space;
\theoremstyle{ref}
\newtheorem{lem}[theorem]{Lemma}
\newtheorem{thm}[theorem]{Theorem}
\newtheorem{defn}[theorem]{Definition}
\newtheoremstyle{nnref}% an optional reference is inserted after no theorem number
{\topsep}	%      Space above
{\topsep}	%      Space below
{}%         		Body font
{}%         		Indent amount (empty = no indent, \parindent = para indent)
{}%			Thm head font
{}%        		Punctuation after thm head
{ }%			Space after thm head: " " = normal interword space;
\theoremstyle{nnref}
\begin{document}

\title{ The Open Graph Axiom and Menger's Conjecture}
\author{Franklin D. Tall{$^1$}, Stevo Todorcevic{$^2$}, Se{\c{c}}il Tokg\"oz{$^3$}}

\footnotetext[1]{Research supported by NSERC grant A-7354.\vspace*{2pt}}
\footnotetext[2]{Research supported by grants from CNRS and NSERC 455916.}
\footnotetext[3]{ Research supported by T\"UB\.{I}TAK  grant 2219.\vspace*{2pt}}
\date{\today}
\maketitle

\begin{abstract}
\noindent Menger conjectured that subsets of $\mathbb R$ with the Menger property must be $\sigma$-compact. While this is false when there is no restriction on the subsets of $\mathbb R$,  for projective subsets it is known to follow from the Axiom of  Projective Determinacy, which has considerable large cardinal consistency strength. We show that  the perfect set version of the Open Graph Axiom for projective sets of reals, with consistency strength only an inaccessible cardinal, also implies Menger's conjecture restricted to this family of subsets of $\mathbb R$.
\end{abstract}

\renewcommand{\thefootnote}{}
\footnote
{\parbox[1.8em]{\linewidth}{  2010 MSC. Primary 03E15, 03E35, 03E60, 54A25, 54D20, 54H05; Secondary 03E45.}}\\
\renewcommand{\thefootnote}{}
\footnote
{\parbox[1.8em]{\linewidth}{Keywords and phrases: Menger, $\sigma$-compact, projective set of reals, Hurewicz Dichotomy,
 Open Graph Axiom.}}

\section{Introduction}
In $1924$,  Menger \cite{Menger} introduced  a topological property for metric spaces which he referred to as  \textbf{ ``property E''}.  Hurewicz \cite{Hur27} reformulated the property  E as the following, nowadays called the \textbf{Menger property}:

\begin{defn} A space $X$ is Menger if whenever $\{{\mathcal U}_{n}\}_{n\in\mathbb{N} }$ is a sequence of open covers, there exist finite ${{\mathcal V}}_{n}\subseteq{\mathcal U}_{n}$, $n\in\mathbb{N}$, such that $\bigcup_{n\in\mathbb{N}} {\mathcal V}_{n}$ is a cover of $X$.
\end{defn}

There has recently been interest in the question of whether ``definable'' Menger spaces {\textemdash } and, more specifically, Menger sets of reals {\textemdash }  are $\sigma$-compact. See e.g., \cite{Tallnew,TTo,Tok}. Hurewicz \cite{Hur25} refuted Menger's conjecture \cite{Menger} under CH. Just et al \cite{COC2}  refuted the Hurewicz conjecture (and hence  the Menger conjecture) in ZFC. Also, a ZFC counterexample was produced by Chaber and Pol \cite{CP} in an unpublished note.  More natural examples were produced by Bartoszy\' nski and Shelah \cite{BaSh01}, and later Tsaban and Zdomskyy \cite{TsZ}. Hurewicz \cite{Hur25} proved that analytic Menger subsets of  $\mathbb R$ are $\sigma$-compact; this was later extended to arbitrary  Menger analytic spaces by Arhangel'ski{\u\i} \cite{A}. Hurewicz  \cite{Hur25} also proved this for completely metrizable spaces; this was extended to \v{C}ech-complete spaces in \cite{TTo}. That determinacy hypotheses suffice was first noticed in \cite{MF} and stated explicitly in \cite{Tall2011}. See also \cite{TTs} and \cite{BT}.\\

In this article, we will investigate  the influence of an appropriate version of the Open Graph Axiom on Menger's conjecture. To introduce this axiom, let $X$ be  a separable metrizable space. We denote the family of all unordered  pairs of its elements by $[X]^{2}=\{(x,y): x,y\in X$ and $x\neq y\}$.   A {\bf graph} on $X$ is a structure of the form $G=(X, E)$ where $X$ is a set of {\bf vertices}, and  $E\subseteq [X]^2$ is a symmetric {\bf edge set}. In other words, $E$ is a symmetric and irreflexive binary relation on $X$ and the pairs in $E$ are called {\bf{edges}} of $G$.  If $E$ is open  in the product space $X\times X$, then $G=(X,E)$ is called an {\bf open graph}.  An open graph of the form $G=(X,E)$ is {\bf countably chromatic} if there is a  decomposition  $X=\bigcup_{n\in\mathbb N} X_n$  such that $[X_n]^2\cap E=\emptyset$  for all $n\in\mathbb N$.  Recall that a {\bf complete subgraph} of $X$ is a subset $Y$ of $X$ such that $[Y]^2\subseteq E.$   Note that countably chromatic graphs do not have uncountable complete subgraphs. So the following dichotomy is one of the natural statements that happen to be consistent with ZFC (see \cite{T1} and \cite{T3}).\\

\textbf{The Open Graph Axiom (OGA).} If $G=(X,E)$ is an open graph on a separable metric space $X$, then $G$ is either countably chromatic or it includes an uncountable  complete subgraph. \\

Connections between \textbf{OGA} and Menger's conjecture are implicit in \cite{T2}. We shall explore those further here.\\

Sometimes one views dichotomies of the form \textbf{OGA} as Ramsey-theoretic statements or coloring axioms.
Recall that a version of an open coloring axiom for sets of reals of cardinality $\aleph_1$ was introduced by  Avraham, Rubin, and Shelah \cite{ARS}. Another version of an open coloring axiom for arbitrary sets of reals was introduced by  the second author  \cite{T1}. We will use \textbf{OGA} for the later version as it is clearly equivalent to \textbf{OGA} stated above.\\
 
 \section{ $\mathbf{OGA^*(\Gamma)}$ and the Hurewicz Dichotomy}
 
  A classical phenomenon,  the \emph{Hurewicz Dichotomy}, was first investigated by Hurewicz \cite{Hur28} and later extended by Kechris, Louveau and Woodin \cite{KLW}. See e.\,g. Section $21.F$ of  \cite{Kech}. Here is one version of the Hurewicz Dichotomy.
\\

\textbf{Hurewicz Dichotomy (HD).}  Let $A\subseteq\mathbb{R}$ be analytic. If $A$ is not $\sigma$-compact, then there is a Cantor set $K\subseteq \mathbb{R}$ such that $K\cap A$ is dense in $A$  and homeomorphic to  $\mathbb{P}$, the space of irrationals, and $K\setminus A$ is  countable dense  in $K$ and homeomorphic to $\mathbb{Q}$, the space of rationals. 

%Let $E$ be  a compact metrizable space, and let $A$ and $B$ be two disjoint subsets of $E$. If there is no $K{_\sigma}$-set, i.e., countable union of compact subsets, $C$ such that $A\subseteq C$ and $C\cap B=\emptyset$, then there is a Cantor set $K$ such that $K\subseteq A\cup B$  and $ K \cap B$ is countable dense in $K$.

\begin{defn} Let $\Gamma$ be a subset of the power set of $\mathbb{R}$. $\mathrm{\mathbf{HD(\Gamma)}}$ is the assertion obtained from $\mathbf{HD}$ by replacing ``analytic''  by `` a member of $\Gamma$'' .
\end{defn}
%\begin{thm}[~ \cite{Hur28}]  Let $X$ be a Polish space (a separable, completely metrizable space). Then $X$ includes a closed copy of $\mathbb{P}$ (the space of irrationals)  if and only if $X$ is not $\sigma$-compact.
%\end{thm}

%It is also of interest that:
%\begin{thm}[~ \cite{KLW}]  Assume  the Axiom of Determinacy. Any subset of  a compact metrizable space satisfies HD.
%\end{thm}

%Weaker forms of determinacy not contradicting the Axiom of Choice yield smaller classes of subsets satisfying \textbf{HD} \cite{Tall2011}.

\begin{thm}[~\cite{Hur28}] If  $\Gamma$ is a collection of subsets of $\mathbb{R}$ satisfying $\mathrm{\mathbf{HD(\Gamma)}}$ as above  then every Menger member of $\Gamma$ is $\sigma$-compact.

\end{thm}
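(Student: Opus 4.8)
The plan is to argue by contradiction, using $\mathbf{HD(\Gamma)}$ to extract a closed copy of the irrationals inside any non-$\sigma$-compact member of $\Gamma$, and then to exploit that the Menger property passes to closed subspaces together with the failure of the Menger property for $\mathbb P$ itself.

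Suppose $A\in\Gamma$ is Menger but not $\sigma$-compact. First I would invoke $\mathbf{HD(\Gamma)}$ to obtain a Cantor set $K\subseteq\mathbb R$ with $K\cap A$ dense in $A$ and homeomorphic to $\mathbb P$. Since $K$ is compact, hence closed in $\mathbb R$, the set $K\cap A$ is closed in the subspace $A$. The next step is to record that the Menger property is inherited by closed subspaces: given open covers $\{\mathcal U_n\}_{n\in\mathbb N}$ of a closed $C\subseteq X$, lift each $\mathcal U_n$ to an open cover of $X$ by adjoining $X\setminus C$, apply the Menger property of $X$, and restrict the resulting finite selections back to $C$. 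Hence $K\cap A$, being closed in the Menger space $A$, is itself Menger.

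It then remains to derive a contradiction from the assertion that $K\cap A\cong\mathbb P\cong\mathbb N^{\mathbb N}$ is Menger. I would show directly that Baire space $\mathbb N^{\mathbb N}$ is not Menger. For each $n$ let $\mathcal U_n=\{U_{n,k}:k\in\mathbb N\}$, where $U_{n,k}=\{f\in\mathbb N^{\mathbb N}:f(n)\le k\}$; each $U_{n,k}$ is clopen and $\mathcal U_n$ covers $\mathbb N^{\mathbb N}$. Given any finite $\mathcal V_n\subseteq\mathcal U_n$, define $g\in\mathbb N^{\mathbb N}$ by $g(n)=1+\max\{k:U_{n,k}\in\mathcal V_n\}$, taking the maximum of the empty set to be $0$. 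Then $g\notin U_{n,k}$ for every $U_{n,k}\in\mathcal V_n$ and every $n$, so $g\notin\bigcup_{n}\bigcup\mathcal V_n$; thus no such selection covers, and $\mathbb N^{\mathbb N}$ fails the Menger property. This contradicts the previous paragraph, so $A$ must be $\sigma$-compact.

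The conceptual content lies entirely in the reduction, so once $\mathbf{HD(\Gamma)}$ is available there is no serious obstacle: the two ingredients needed, closed-hereditariness of the Menger property and the non-Mengerness of $\mathbb N^{\mathbb N}$, are both elementary and classical. The one point requiring a little care is confirming that the homeomorphic copy supplied by $\mathbf{HD(\Gamma)}$ is a genuinely \emph{closed} subspace of $A$, which is what drives the argument; this is exactly where the compactness of $K$, rather than the density of $K\cap A$, is used.
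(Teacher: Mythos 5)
Your proposal is correct and follows essentially the same route as the paper: apply $\mathbf{HD(\Gamma)}$ to get a Cantor set $K$ with $K\cap A$ a closed copy of $\mathbb P$ inside $A$, then use closed-hereditariness of the Menger property together with the fact that $\mathbb P$ is not Menger. The only difference is that you supply elementary proofs of those two ingredients where the paper cites Hurewicz and Kechris, and both of your verifications are sound.
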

\begin{proof} Let $A$ be a Menger member of $\Gamma$. Suppose $A$ is not  $\sigma$-compact.  By \textbf{HD($\Gamma$)},  there is a Cantor set $K$ such that $K\subseteq \mathbb{R}$  and $ K\setminus A$ is countable dense in $K$. Then 
$K\cap A$ is homeomorphic to $\mathbb{P}$ {~\cite[p.\,160]{Kech}}. But $K\cap A$ is a closed subset of $A$ and  $\mathbb{P}$ is not Menger \cite{Hur27}; since that property is closed-hereditary,  $A$ cannot be Menger.
\end{proof}

Now  we recall  some basic notation and constructions:   A  partially ordered set $(T,\prec)$ is called a {\textbf{ tree}}  if for each $x\in T$, the set  $\{y: y\prec x \}$ is well-ordered by $\prec$. A {\textbf{ branch}} in $T$ is a maximal linearly ordered subset  of $T$. The {\textbf{ body of T}}, written as $[T]$, is the set of all infinite branches of $T$.  A tree  $T$ is called {\textbf{ pruned }} if every $s\in T$ has a proper extension $s\prec t, t\in T$. $[\mathbb{N}]^{< \mathbb{N}}$ denotes the collection of all finite subsets of $\mathbb{N}$, and it can be considered as a tree under end-extension. See  Section 2.C in \cite{Kech} for more details.\\

 Define $[{\mathbb{N}]^{<\mathbb{N}}}\otimes [{\mathbb{N}]^{< \mathbb{N}}}=\{(s,t)\in  [{\mathbb{N}]^{< \mathbb{N} }}\times  [{\mathbb{N}]^{<\mathbb{N}  }}:$ $\vert{s}\vert{=}\vert{t}\vert \}$. Then $[{\mathbb{N}]^{<\mathbb{N} }}\otimes [{\mathbb{N]}^{<\mathbb{N} }}$ is   also a tree with the product ordering $\sqsubseteq$, defined by  $(s,t)\sqsubseteq (s',t')$ if and only if  $s\subseteq s'$ and $t\subseteq t'$. \\

 For  $a,b\in {\mathcal P}(\mathbb{N)}$,  $a\subseteq ^{* } b$ ( $a$ is {\it almost included} in $b$) if $a\setminus b\in[\mathbb{N}]^{< \mathbb{N}} $.

 \begin{defn} 
\item 1. For $a,b\in{\mathcal P}(\mathbb{N})$, \textbf{ $a\perp b$} ($a$ and $b$ are orthogonal) if and only if \\  $a\ \cap \ b\in[\mathbb{N}]^{< \mathbb{N}}$.
\item 2. Let $\mathcal{A}$ and $\mathcal{B}$ be families of  subsets of $\mathbb{N}$. $\mathcal{A}\perp\mathcal{B}$ if and only if\\ $( \forall a\in \mathcal{A})( \forall b\in \mathcal{B})$ $a\perp b$.
\item 3. For a family  $\mathcal{A}$ of  subsets of $\mathbb{N}$, $\mathcal{A}^{\perp}=\{ b\in\mathcal P(\mathbb{N}): (\forall a\in \mathcal{A})$ $a\perp b \}$.
\item 4. For $\mathcal{A}\perp\mathcal{B}$, $\mathcal A$ is countably generated in $ \mathcal{B}^\perp$ if there is  a sequence $\{c_n\}$ of elements of $ \mathcal{B}^\perp$ such that every element of $\mathcal A$ is almost  included in one of  the $c_n$'s. 
 \end{defn}

Feng \cite{Fe} discussed the following natural  perfect set variation of $\mathbf{OGA}$ restricted to a given collection of subsets of $\mathbb R.$\\

\begin{defn} $\mathbf{OGA^*(\Gamma):}$  Let $\Gamma\subseteq \mathcal{P}(\mathbb R)$. Then for each open graph of the form $G=(X, E)$ where the vertex set  $X$ belongs to $ \Gamma$, either $G$ is countably chromatic or it includes a {\bf perfect} complete subgraph, i.e., a perfect set $P\subseteq X$ so that the induced subgraph is complete. \\
\end{defn}

\begin{thm}
Assume $\mathbf{OGA^*( \Gamma)}$  and that $\Gamma$  is closed under continuous pre-images. Then $\mathrm{\mathbf{HD(\Gamma)}}$  holds and  hence Menger $\Gamma$ sets are $\sigma$-compact.
\end{thm}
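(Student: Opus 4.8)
The plan is to derive $\mathrm{\mathbf{HD(\Gamma)}}$ from $\mathbf{OGA^*(\Gamma)}$; the clause ``hence Menger $\Gamma$ sets are $\sigma$-compact'' is then immediate from the theorem of Hurewicz quoted above, applied to $\Gamma$. So fix $A\in\Gamma$ which is not $\sigma$-compact and seek a Cantor set $K$ with $K\setminus A$ countable dense in $K$; then $K\cap A$ is a dense $G_\delta$ in $K$ with countable complement, hence a closed-in-$A$ copy of $\mathbb P$, which is what $\mathrm{\mathbf{HD(\Gamma)}}$ asks for. After a routine reduction — replacing $A$ by a suitable continuous pre-image and using that $\Gamma$ is closed under continuous pre-images to keep the vertex set of the graph below inside $\Gamma$ — I may assume I am working in a zero-dimensional ambient space and that points are coded by branches, so that every open condition is decided by a finite initial segment.

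Next I would attach to $A$ an open graph $G=(X,E)$ with $X\in\Gamma$ whose $\mathbf{OGA^*(\Gamma)}$ dichotomy mirrors the $\sigma$-compact/Hurewicz dichotomy. Concretely, I fix a set $W\subseteq[\mathbb N]^{<\mathbb N}\otimes[\mathbb N]^{<\mathbb N}$ of equal-length pairs and declare $\{x,y\}\in E$ exactly when $(x\!\upharpoonright\! n,\,y\!\upharpoonright\! n)\in W$ for some $n$; this makes $E$ open in $X\times X$ automatically. The set $W$ is read off from the failure of $\sigma$-compactness so that an edge records a finite-stage witness that $x$ and $y$ split and simultaneously climb above a level-dependent threshold, i.e.\ a finite obstruction to lying in one common finitely branching (compact) tree. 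Associating to each point the set of levels at which it is ``large'' and interpreting $E$ through the orthogonality calculus introduced above, an $E$-independent set is one whose large-level sets are pairwise orthogonal and uniformly controlled, and such a set has compact closure in the coding space.

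I would then apply $\mathbf{OGA^*(\Gamma)}$ to $G$. If $G$ is countably chromatic, then $X=\bigcup_n X_n$ with each $[X_n]^2\cap E=\emptyset$, so by the previous paragraph each $X_n$ has compact closure; pushing these forward along the coding map exhibits $A$ as a countable union of compact sets, contradicting the choice of $A$. Hence the second alternative holds and there is a perfect complete subgraph $P\subseteq X$: every two of its points split and climb together, which is precisely the combinatorial content of a superperfect (Hurewicz) scheme. Threading a copy of the tree $2^{<\mathbb N}$ through $P$ along infinitely splitting nodes then produces a continuous injection of $2^{\mathbb N}$ whose image is the sought Cantor set $K$, with the branches landing in $A$ and the countably many scheme limits landing in $\overline A\setminus A$, so that $K\setminus A$ is countable dense in $K$. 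To get $K\cap A$ dense (and, if one wants the literal ``dense in $A$'', to first pass to a nowhere $\sigma$-compact closed part of $A$) I would run the scheme against a fixed countable base, refining at each splitting so that every basic set meeting the non-$\sigma$-compact core is eventually hit.

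The step I expect to be the main obstacle is the design of $W$ together with the simultaneous verification of the two implications. Openness forces $E$ to be a \emph{finite-witness} relation, whereas the superperfect scheme needs genuine \emph{infinite} joint escape; bridging from the finite witnesses carried by the edges of $P$ to an actual infinitely splitting scheme — essentially upgrading ``pairwise non-orthogonal with finite witnesses'' to a perfect almost-disjointness obstruction, in the sense of the notion ``countably generated in $\mathcal B^\perp$'' above — is the crux. At the same time $W$ must be fine enough that each $E$-independent color class has honestly compact closure in the correct ambient space, so that countable chromaticity yields true $\sigma$-compactness of $A$ rather than mere boundedness. Calibrating $W$ to meet both demands, and matching the orthogonality dichotomy with the geometry of compact subsets of $\mathbb N^{\mathbb N}$, is where the real work lies.
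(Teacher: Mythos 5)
Your overall architecture is the right one — an open graph attached to a non-$\sigma$-compact $A\in\Gamma$ whose countably chromatic alternative contradicts non-$\sigma$-compactness and whose perfect-clique alternative yields the superperfect/Cantor-set structure — and the reduction via continuous pre-images is as in the paper. But the proposal has a genuine gap, and you have located it yourself: the definition of $W$ and the two verifications you defer to the end (``the crux'', ``where the real work lies'') are not peripheral calibration; they are the entire content of the theorem, and the design you sketch points at the wrong dichotomy. Edges recording that two points of $A$ ``split and simultaneously climb'', with independent sets having compact closure, encode the $\sigma$-\emph{bounded} (covered by countably many compact sets) versus dominating dichotomy of Baire space. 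That is not what is needed: a set covered by countably many compact sets need not be $\sigma$-compact (a Bernstein set in $[0,1]$ is a counterexample), and after the reduction to the Cantor space every closed set is compact, so ``compact closure'' carries no information. The obstruction to $\sigma$-compactness of $A$ is the way each piece's closure meets $B=\overline{A}\setminus A$, so the graph must see $B$, and no choice of $W$ defined on (codes of) points of $A$ alone can be calibrated to do this.

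The paper's resolution is to code points of $A$ and of $B$ as chains $\hat a,\hat b$ in the tree $2^{<\mathbb N}$, take the vertex set to be $Y=\hat A\times\hat B$ (pairs consisting of one point of $A$ and one point of the boundary), and declare $\{(a,b),(a',b')\}$ an edge iff $(a\cap b')\cup(b\cap a')\neq\emptyset$ — the ``crossing'' relation from the theory of analytic gaps. The bridge you are missing is then Lemma 2.6 of the paper: this graph is countably chromatic iff $A$ is countably generated in $B^\perp$, which is what is actually equivalent to $\sigma$-compactness of $A$ (a generator orthogonal to $\hat B$ spans a closed subtree whose branches stay inside $A$). On the other alternative, a perfect clique $F\subseteq Y$ is not itself a scheme inside $A$; one must run the B-tree construction along $T_F$, using at each node that $A_{(s,t)}$ is not countably generated in $B^\perp$ to find an element of $B$ meeting it infinitely, to build a superperfect tree $\Sigma$ with $[\Sigma]\subseteq A$, giving the closed copy of $\mathbb P$. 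Your ``threading $2^{<\mathbb N}$ through $P$'' gestures at this but operates on the wrong object. So the proposal is a correct outline of the strategy but omits the two constructions that make it work.
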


\begin{proof} Our argument is  similar to the proof of Theorem 3 in \cite{T2}.  
  Let $A\in  \Gamma$. Without loss of generality, let $A \subseteq (0,1)$. Suppose $A$ is not $\sigma$-compact.  
  Let $bA$ be the closure of $A$ in $[0,1]$. Define $B=bA\setminus A$. $bA$ is a continuous image of the Cantor space $ E=\{0,1\}^\mathbb{N}$ {~\cite[p.\,23]{Kech}}. Let $\pi: E\rightarrow bA$ be a continuous surjection, and set $A^{*}=\pi^{-1}(A)$,  $B^{*}=\pi^{-1}(B)$. Then $A^{*}\cap B^{*}=\emptyset$.  Moreover,  $A^*$ cannot be  $\sigma$-compact, since $A$ is not $\sigma$-compact. Therefore, if we could prove the result for  $A^*\in \Gamma$ and $A^*\subseteq E$, then we would have a Cantor set $E'\subseteq E$ with $E'\subseteq   A^{*}\cup B^{*} $, $E'\cap B^*$ countable dense in $E'$, $E'\cap A^*$ also dense in $E'$. Since $\pi$ is a surjection, $\pi(E') \cap B$ and $\pi(E') \cap A$ would be dense in $\pi(E')$. Clearly $K=\pi(E')$ is compact. $K$ has no isolated points, since it has two disjoint dense subsets. By  the Baire Category Theorem, $K$ must be uncountable. Since it is perfect, by a standard argument  it includes a  Cantor set $C$ such that   $C\subseteq A\cup B$  and $C\cap B$  is countable and  dense in $C${~\cite[p.\,162]{Kech}}.   
  
  To see this, one can construct a Cantor scheme $\{C_s\}_{s\in 2^{<{\mathbb{N}}}}$ by using open sets $C_s$ in $K$ with  $x_{s}\in C_{s}\cap B$ such that $diam(C_{s})<2^{-length(s)}$, $\overline{C_{s\frown i}}\subseteq C_s$, and $x_{s\frown 0}=x_s$ for all $s\in \{0,1\}^{{<{\mathbb{N}}}}$, $i\in\{0,1\}$. Then such a Cantor scheme determines a Cantor set $C=\bigcup\limits_{x\in E}\bigcap\limits_{n\in \mathbb{N}} C_{x\mid n}$ in $K$ such that $C\cap B$ is countable dense in $C${~\cite[p.\,162]{Kech}}. 
  
  To see $C\cap B$ is dense in $C$, take a nonempty open set $U$ in  $C$. Then there is a $x_{\tilde{s}} \in U$. By construction, $\{x_{\tilde{s}}\}=\bigcap\limits_{n\in \mathbb{N}} C_{x\mid n}$ for some $x\in E$. Since all $C_{x\mid n}$'s are open in $K$ and  $K\cap B$ is dense in $K$,  by construction of the points of $B$ in  the Cantor scheme,  $x_{\tilde{s}}\in(C_{x\mid n}\cap B)$ , and so $x_{\tilde{s}} \in U\cap(C\cap B)$.

Thus, without loss of generality, we may assume $A$ is a subset of  the Cantor space $E$.

 %By $exp(E)$  we denote the set of all closed (compact) subsets of $E$ with {\bf {the exponential topology}}, which is generated by the sets of the form $\{K\in exp(E): K\subseteq U\}$ or  $\{K\in exp(E): K\cap U\neq\emptyset\}$, where $U$  is open in $E$. Let us note that $exp(E)$ is homeomorphic to $E$ {~\cite[p.\,39]{T1}}. 
 Define  $\hat{A}= \bigcup\limits_{a\in A}\hat{a}$ where $\hat{a}$ is the set of all infinite chains of $\{0,1\}^{< \mathbb{N}}$ whose union is equal to $a$.  
Notice that a Cantor set can be viewed as the branches of the complete binary tree $\{0,1\}^{<\mathbb{N}}$. It follows that for any $a\in A$, $\hat{a}$ is a downward closed set in the tree $\{0,1\}^{<\mathbb{N}}$, and so  we consider $\hat{A}$ as a subspace of $\mathcal{P}(2^{<\mathbb{N}})$ which we can consider as $\{0,1\}^{2^{<{\mathbb{N}}}}$. In a similar way set $\hat{B}= \bigcup\limits_{b\in B}\hat{b}$ where $\hat{b}$ is the set of all infinite chains of $\{0,1\}^{< \mathbb{N}}$ whose union is equal to $b$. $\hat{A}$ and $\hat{B}$ are two orthogonal families of infinite subsets of $\{0,1\}^ {{<{\mathbb{N}}}}$. 
Let $Y=\hat {A}\times {\hat{B}}$. Note that the topology on $\hat{A}$ and $\hat{B}$ is induced by the topology on $H= {\{0,1\}}^{\{0,1\}^{<{\mathbb{N}}}}$, which is a copy of the Cantor space. Clearly, $H$ is compact and metrizable and so $\hat{A}$ and $\hat{B}$ are separable and metrizable. Therefore, $Y$ is separable and metrizable. Let $K_0$ be the set of all $\{(a,b),(a',b')\}$ from $[Y]^2$ such that $(a\cap b')\cup(b\cap a')\neq\emptyset$.  Since the product space $Y\times Y$ is Hausdorff, one  can find disjoint open subsets  about distinct pairs in $K_0$, and so $K_0$ is  an open subset of the product space $Y\times Y$ (see also {~\cite[p.\,146]{Fa}}). We shall now consider the open graph $G=(Y,K_0)$  and  apply OGA$^*(\Gamma$) to  $[Y]^{2}$. \\

%Recall that two famil ies $\mathcal A$ and $\mathcal B$ of subsets of  $\mathbb{N}$  are {\bf {countably separated}} if there are  sets $c_n$ $(n \in \mathbb{N})$ such that for every pair $a \in \mathcal{A},\ b \in \mathcal{B}$ there is an $n$ such that $a\subseteq^{*}c_{n}$ and $c_{n}\cap b$ is finite {~\cite[p.77]{Fa}}.  

If $A$ and $B$ are  complementary sets  in $E$, i.e., $ B=E\setminus A$, then one can consider $A$ and $B$ as  families of subsets of $\mathbb{N}$ such that $A\perp B$, and notice that $B^\perp \subseteq A$. By using a modification of  the result given in {~\cite[p.146]{Fa}} we have:

 \begin{lem}\label{farlem} $A$ is  countably generated in $B^\perp$ if and only if $G=(Y,K_0)$ is countably chromatic.
 \end{lem}

To see this, let $A$ be  countably generated in $B^\perp$. Then there is a sequence $\{c_{n}\}_{n\in\mathbb{N}}$ of elements of $B^\perp$ such that every element of $A$ is included, mod finite,  in one of the $c_n$'s. For $c\subseteq \mathbb{N}$, define $H_{c}=\{(a,b)\in Y: a\subseteq c$ and $b\cap c=\emptyset \}$. $H_{c}$ is  a subset of $Y$ and $[H_{c}]^2$ is disjoint from $K_{0}$.  Since every $a\in A$ is almost included  in one of the $c_n$'s,  define  $C_{n}=\bigcup\{c_{n}\cup s:  s\cap c_n=\emptyset,\, s\in A\cap [\mathbb{N}]^{< \mathbb{N}}\}$. Then  the union of  all  $H_{C_{n}}$'s covers $Y$. This implies $G$ is countably chromatic. For the other direction, let $Y$ be  decomposed into countably many subsets $Y_n$, i.e., $Y=\bigcup\limits_{n\in\mathbb{N}} Y_n$  where $[Y_n]^2\cap K_0=\emptyset$ for all $n\in\mathbb N$. Define ${c_{Y_n}}=\bigcup\limits_{(s,t)\in{Y_n}}s$, for all $Y_n$'s. Then every element of $A$ is almost included in one of  the $c_{Y_n}$'s. This implies $A$ is  countably generated in $B^\perp$.\qedhere

Since $A$ is not $\sigma$-compact, $A$ is  not countably generated in $B^\perp$.
To see this, suppose $A$ is  countably generated in $B^\perp$.  Then there is a sequence $\{c_n\}_{n\in\mathbb{N}} $ of elements of $B^\perp$, indeed elements of $A$, such that every element of $A$ is almost included in one of the $c_n$'s. For each $a\in A$, there is a $n\in  \mathbb{N}$ such that $a_{n}=a\setminus c_{n}\in[\mathbb{N}]^{< \mathbb{N}}$. Then $A=(\bigcup\limits_{n\in\mathbb{N}}{c_{n}})\cup(\bigcup\limits_{n\in\mathbb{N}} a_n)$. Notice that we identify the set of infinite branches of the tree $\{0,1\}^{< \mathbb{N}}$  with the infinite subsets of $\mathbb{N}$. Since each $c_{n}$ is a branch of the tree $\{0,1\}^{< \mathbb{N}}$, it is closed in $E$, and so it is compact. On the other hand, any finite subset of $E$ is compact. It follows that $A$ is $\sigma$-compact, a contradiction.\qedhere\\

% Since perfect subsets of separable metrizable spaces  have the cardinality of the continuum, set $F=\{(a_{\alpha},b_{\alpha})\in\hat{A}\times\hat{B}:  \alpha<2^{{\aleph}_{0}} \}$. 

Since  $G$ is not  countably chromatic, by $OGA^*(\Gamma)$, there is a perfect subset $F$ of $Y$ such that $[F]^{2}\subseteq K_{0}$. Since $F$ is perfect, there is a pruned tree $T_F$ such that $F=[T_F]$ {~\cite[Proposition 2.4]{Kech}}. Thus, an infinite $a \subseteq\mathbb{N}$ is in $p_{1}(F)$, where $p_{1}$ is the projection map onto the  first coordinate, if and only if  there is an infinite branch $f=\{(s_{k},t_{k})\}_{k\in  \mathbb{N}}$ of $T_F$ in  $\{0,1\}^{< \mathbb{N} }\otimes\{0,1\}^{< \mathbb{N}}$  such that $a$ is the union of  the $s_k$'s.  Since  $T_F $ is a pruned tree, it has a lexicographically least element, which is called  its \textbf{leftmost branch} {~\cite[p.\,9]{Kech}}. Let $f_{a}$ be the leftmost branch  of  $T_F$,  with the union of its first coordinates equal to $a$.\\

%One can find a leftmost branch $f_{ a}$ for each infinite $a\in p_{1}(F)$. Notice that $\hat{a}$ is the set of all infinite chains of $\{0,1\}^{< \mathbb{N}}$ with union equal to $a$. Indeed, $\hat a$ is the downward closure of $a$, so it is  a closed subset of $\{0,1\}^{< \mathbb{N}}$. By Zorn's lemma, there is a maximal infinite chain in $\hat{a}$. Therefore  there is an infinite branch of $\{0,1\}^{< \mathbb{N}}$ with union  equal to $a$. Moreover, $a$ is the projection of  a closed subset  $\hat{a}\times\hat{b}$ of $F$. Then $\hat{a}\times\hat{b}$ can be represented as the body of some pruned subtree of $\{0,1\}^{< \mathbb{N}}\otimes\{0,1\}^{< \mathbb{N}} $. Therefore, we can choose a leftmost branch for each infinite $a\in p_{1}(F)$].

All branches of $T_F$ are in the complete tree $\{0,1\}^{< \mathbb{N}}\otimes\{0,1\}^{< \mathbb{N}}$. By modifying Lemma 4.11 in \cite{jech} we have:

\begin{lem}If $F\subseteq{\{0,1\}}^{{\mathbb{N}}}\times{\{0,1\}}^{{\mathbb{N}}}$
is perfect , then\\ $T_{F}=\{(s,\tilde{s})\in
{\{0,1\}^{<{\mathbb{N}}}\otimes\{0,1\}^{<{\mathbb{N}}}\,:
(s,\tilde{s})\sqsubseteq (f,\tilde{f})\,{\text{for some}}\,
(f,\tilde{f})}\in F \}$ is a  perfect tree.
\end{lem}
\begin{proof}
Let $(s,\tilde{s})\in T_{F}$. Then there is a $(f,\tilde{f})\in F$
such that  $(s,\tilde{s})\sqsubseteq (f,\tilde{f})$. Indeed,  there is an $ n_{0}\in \mathbb{N}$ such that
$(f{\upharpoonright_{n_0}},\tilde{f}{\upharpoonright_{n_0}})=(s,\tilde{s})$
and \, $(s,\tilde{s})\sqsubseteq(f{\upharpoonright_{m}},\tilde{f}{\upharpoonright_{m}})$  for all
$m>n_0$. Since $F$ has no isolated point,  $(f,\tilde{f})$ is a limit point of $F$. Then there exists  a different point $(g,\tilde{g}) \in F$ such that   $(s,\tilde{s})\sqsubseteq (g,\tilde{g})$. Therefore, $(s,\tilde{s})$ has two incomparable extensions, that is $(s,\tilde{s})\sqsubseteq (g{\upharpoonright_{n_i}},\tilde{g}{\upharpoonright_{n_i}})$, $(s,\tilde{s})\sqsubseteq(f{\upharpoonright_{n_j}},\tilde{f}{\upharpoonright_{n_j}})$ for some $n_i,n_j\in \mathbb{N}$.\qedhere

\end{proof}

Now for $(s,t) \in T_F$,  define ${A}_{(s,t)}=\{ a\in p_{1}[F]: f_{a}$ extends $(s,t)\}$. Notice that ${A}_{(s,t)}$ is not countably generated in ${B}^\perp$. If it were, then $F$ would be a countable union of sets, call them $F_n$'s, such that $[ F_{n}]^2\cap K_0=\emptyset$ for all $n$. But this is a contradiction, since $[F]^{2}\subseteq K_0$.\\ 

To complete the proof  of $2.5$ we need a tree which consists of finite subsets of $\mathbb{N}$ as follows: let $B$ be a family of subsets of $\mathbb{N}$. $\Sigma\subseteq [\mathbb{N}]^{< \mathbb{N} }$ is called a \textbf{B-tree}  \cite{T2} if \\
 
 \noindent(i) $\emptyset\in \Sigma$,\\
 (ii) for every $\sigma\in \Sigma$, the set $\{ i\in \mathbb{N}: \sigma\cup \{i\}\in\Sigma\}$ is infinite and included in an element of $B$.\\

Now we will recursively construct a B-tree $\Sigma$ by using elements of $T_F$ and some elements of $B$ such that $\Sigma$ satisfies the  following conditions:\\
 
\noindent  1. If  $\sigma\subset \tau$ ($\tau$ strictly end-extends $\sigma$), then $(s_{\sigma}, t_{\sigma})\sqsubset (s_{\tau}, t_{\tau}) $ for $\tau,\sigma\in \Sigma$,\\
 2. $\sigma\subseteq s_{\sigma}$ for all $\sigma\in \Sigma$,\\
 3. $h_{\sigma}=\{i\in d_{\sigma}\in B: \sigma\cup\{i\}\in \Sigma\}$ is infinite for all $\sigma\in \Sigma$.\\

 Let $s_{\emptyset}= t_{\emptyset}=\emptyset$. Suppose we have some $\sigma\in \Sigma$ and  we know $(s_{\sigma},t_{\sigma})\in T_F$. Then  ${A}_{(s_{\sigma},t_{\sigma})}$ is not countably generated in ${B}^\perp$. Let $c_\sigma$ denote the union of  the elements of  $A_{(s_{\sigma},t_{\sigma})}$. Then $c_\sigma$ is not orthogonal to $B$. Therefore there is an element $d_\sigma$ of $B$ such that $c_{\sigma}\cap d_\sigma$ is infinite. Let $h_{\sigma}=\{i\in {c_{\sigma}\cap d_{\sigma}}: i > \mathrm{max(ran(s_{\sigma}))}\}$. For $i\in h_\sigma$ choose an element ${a}_{i}\in A_{(s_{\sigma},t_{\sigma})}$ such that $i \in {a}_i$ and denote by  $(s^{i},t^{i})$  the least element of the branch $f_{{a_i}}$ such that $ i\in s^i $.  Then put $\sigma\cup \{ i \}$ in $\Sigma$, and set $s_{\sigma \cup \{i\}}=s^i$ and $t_{\sigma \cup \{ i \}}=t^i$  for every such $i$. This completes the construction of $\Sigma$.\\

Recall that a tree $T$ is \textbf{superperfect} if for every $u\in T$ there is $v\in T$ extending $u$ such that $\{m\in \mathbb N: v\cup\{i\}\in T\}$ is infinite \cite{Kech77}. Notice that $\Sigma$ is a superperfect tree. It is known if $T$ is a superperfect tree, then there is a closed subset of $[T]$ which is homeomorphic to $\mathbb P$ {~\cite[p.\,163]{Kech}}. Observe that an infinite branch of  the B-tree $\Sigma$  enumerated increasingly is of the form $\{i_{0}, i_{1}, i_{2},\dots\}$ where $\sigma_{0}=\emptyset$, $\sigma_{1}=\{i_{0}\}$,  $\sigma_{2}=\{i_{0},i_{1}\}, \dots$. By construction of $\Sigma$, the pairs $(s_{{\sigma}_{i}},t_{{\sigma}_{i}})$, $i=1,2,\dots$, determine an infinite branch of $T_F$ whose projection is a member of $A$. Note also, by following item $2$ of the conditions on $\Sigma$, each $s_{{\sigma}_{i}}$ includes  the infinite branch $\{i_{0}, i_{1}, i_{2},\dots\}$. Then $[\Sigma]\subseteq A$.
Since $\Sigma$ is a superperfect tree,  there is a closed subset $Z$ of $A$, which is homeomorphic to $\mathbb P$, which was to be proved.\qedhere

\end{proof}

\begin{defn} The family $\mathcal{P}$ of projective sets of reals is  obtained by  closing the Borel sets under complementation and continuous image.
\end{defn}

\begin{thm}[~\cite{Fe}] If it is consistent that there is an inaccessible cardinal,  it is consistent that $OGA^*(\mathcal {P})$ holds, where $\mathcal{P}$ is the family of projective sets.

\end{thm}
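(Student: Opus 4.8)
The plan is to verify $\mathbf{OGA^*(\mathcal{P})}$ in a Solovay-style model obtained by L\'evy-collapsing an inaccessible cardinal, so that the consistency strength used is exactly that of one inaccessible. Start with $V\models\mathrm{ZFC}$ containing an inaccessible cardinal $\kappa$, let $\mathbb{P}=\op{Coll}(\w,{<}\kappa)$ be the L\'evy collapse, and let $G$ be $\mathbb{P}$-generic over $V$, so that $\kappa=\w_1^{V[G]}$. I would show that $\mathbf{OGA^*(\mathcal{P})}$ holds in $V[G]$. Two ingredients drive the argument: a $\mathrm{ZFC}$ base case giving the perfect-set open graph dichotomy for \emph{analytic} vertex sets, and a Solovay-style reflection that propagates this dichotomy through the projective hierarchy inside $V[G]$, using the weak homogeneity of $\mathbb{P}$ together with the regularity properties of projective sets that the inaccessible cardinal supplies.

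For the base case, fix an open graph $G_0=(X,E)$ with $X$ analytic and not countably chromatic; I would produce a perfect complete subgraph by a fusion argument carried out on a representation of $X$. First pass to the relatively closed kernel $X_{unc}\subseteq X$ of those $x$ every neighbourhood $U$ of which meets $X$ in a set that is still not countably chromatic; by a Lindel\"of argument (discarding the countably-chromatic relatively open remainder, which is a countable union of $E$-independent pieces) $X_{unc}$ is nonempty and not countably chromatic, and the same holds for $X_{unc}\cap U$ whenever $x\in X_{unc}$ and $U\ni x$. Since a not-countably-chromatic set is never $E$-independent, each such $X_{unc}\cap U$ carries an $E$-edge. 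Fixing a continuous surjection $f\colon\mathbb{N}^{\mathbb{N}}\twoheadrightarrow X$ witnessing analyticity, I would build a Cantor scheme in the domain whose branch-images lie in $X_{unc}$ and, at each splitting node, are joined by an edge; by openness of $E$ these edges persist to all finer stages, so that any two distinct branches, diverging at some node, have $E$-connected images. The decisive use of analyticity is that the scheme is run along $f$, whose continuity guarantees that the limits of the branches are genuine points of $X$ rather than escaping it; the resulting perfect set $P\subseteq X$ then satisfies $[P]^2\subseteq E$.

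For a general projective $X$, definable from a real parameter $a$, choose $\alpha<\kappa$ with $a\in V[G\restriction\alpha]$ and regard $V[G]$ as a L\'evy extension of $V_0=V[G\restriction\alpha]$, in which $\kappa$ remains inaccessible. The heart of the matter is Solovay's analysis of the collapse. Because $\mathbb{P}$ is weakly homogeneous and $\kappa$ is inaccessible, membership of any real of $V[G]$ in $X$ is decided by $\op{Coll}(\w,{<}\kappa)$ computed over $V_0$, projective sets of $V[G]$ enjoy the perfect set property, and the statement ``$G_0$ is not countably chromatic'' is absolute between $V[G]$ and the relevant intermediate collapse extensions. Working over $V_0$ with a name for $X$, I would consider the tree of finite $E$-cliques and argue that if $G_0$ is not countably chromatic then this tree cannot be pruned to a countable $E$-independent decomposition; the collapse generics then supply a perfect set of branches which, thanks to the openness of $E$ and the persistence it provides, are pairwise joined by edges, i.e.\ a perfect complete subgraph lying inside $X$.

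The main obstacle is making this reduction rigorous: one must show that the failure of countable chromaticity, a statement about the projective set $X$ in $V[G]$, is faithfully reflected by the forcing computation over $V_0$, and that the perfect clique extracted from the collapse generics genuinely lies inside $X$ as computed in $V[G]$ rather than in some approximation. Keeping the clique inside $X$ across the change of model is the analogue of the ``limits stay in $X$'' difficulty already met in the analytic base case, here handled by the name representation of $X$ over $V_0$ in place of a continuous surjection. This is exactly where the weak homogeneity of $\op{Coll}(\w,{<}\kappa)$ and the inaccessibility of $\kappa$ are indispensable: they provide the generic absoluteness and perfect set property that convert the $\mathrm{ZFC}$ analytic dichotomy into the full $\mathbf{OGA^*(\mathcal{P})}$, with no more consistency strength than a single inaccessible. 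The remaining points --- that $\kappa=\w_1$ in $V[G]$, that countable chromaticity is correctly matched across the models, and that the scheme yields a genuinely perfect set --- are routine.
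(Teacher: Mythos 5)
The paper offers no proof of this statement: it is imported verbatim from Feng's paper (his Theorem 4.1), and the only hint the paper gives about its proof is the remark in Section 3 that $OGA^*(\mathrm{projective})$ holds ``in the model obtained by collapsing an inaccessible to $\omega_1$ with finite conditions.'' Your choice of model, $\operatorname{Coll}(\omega,{<}\kappa)$ over a ground model with $\kappa$ inaccessible, and your two-stage strategy --- a ZFC perfect-set dichotomy for analytic vertex sets followed by a Solovay-style lift through the projective hierarchy --- are exactly Feng's. Your analytic base case is essentially complete and correct: the kernel $X_{unc}$ of points all of whose neighbourhoods remain non--countably-chromatic, the persistence of edges under refinement by openness of $E$, and the fusion run along a continuous surjection $f\colon\mathbb{N}^{\mathbb{N}}\to X$ do produce a perfect clique; this is Feng's Theorem 1.1.

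The gap is in the projective step, which is where the entire content of the theorem lies. You assert that ``the statement `$G_0$ is not countably chromatic' is absolute between $V[G]$ and the relevant intermediate collapse extensions'' and that ``the collapse generics then supply a perfect set of branches,'' but these are precisely the non-routine points. A witness to countable chromaticity in $V[G]$ is an arbitrary countable decomposition, not a definable one, so one must first replace it by a canonical candidate decomposition computed from names over $V_0=V[G\restriction\alpha]$ and prove that if \emph{that} decomposition fails to cover $X$ then a perfect clique exists; and the perfect set extracted from generics must be shown to consist of reals satisfying the projective definition of $X$ \emph{in the final model} $V[G]$, which requires the homogeneity of the collapse (to convert ``$p$ forces $\varphi(\tau,a)$ over $V_0$'' into truth in $V[G]$) together with the fact that every real of $V[G]$ lies in an intermediate extension by a forcing that is countable in $V[G]$ --- this is where the inaccessibility of $\kappa$ is actually used. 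Your ``tree of finite $E$-cliques'' is a plausible skeleton for this, but as written the argument stops exactly where Feng's proof begins. Since the paper itself only cites the result, the cleanest course is to do likewise; otherwise these two points must be carried out in detail rather than flagged as the ``main obstacle.''
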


\begin{cor} If it is consistent there is an inaccessible cardinal, it is consistent that every projective Menger set of reals is $\sigma$-compact.

\end{cor}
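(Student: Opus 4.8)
The plan is to obtain the Corollary as an immediate consequence of two results already established in the excerpt: Theorem 2.7 (Feng), which provides the consistency of $\mathbf{OGA^*(\mathcal P)}$ relative to an inaccessible, and Theorem 2.5, which says that $\mathbf{OGA^*(\Gamma)}$ together with closure of $\Gamma$ under continuous pre-images yields $\mathbf{HD(\Gamma)}$ and hence that Menger $\Gamma$-sets are $\sigma$-compact. So the core of the argument is just to verify that $\Gamma = \mathcal P$, the family of projective sets, satisfies the closure hypothesis of Theorem 2.5, and then to chain the implications inside the model Feng produces.

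First I would fix a model of ZFC in which $\mathbf{OGA^*(\mathcal P)}$ holds; Theorem 2.7 guarantees that such a model exists whenever an inaccessible cardinal is consistent, so the whole argument takes place under that single consistency assumption. Next I would check that $\mathcal P$ is closed under continuous pre-images. By Definition 2.6, $\mathcal P$ is the closure of the Borel sets under complementation and continuous image; the standard fact here is that each projective pointclass is closed under continuous (indeed Borel) pre-images, since if $A \in \mathcal P$ and $f$ is continuous then $f^{-1}(A)$ lies in the same projective level as $A$ (continuous pre-images commute with the Boolean operations and with projection, which is exactly what the projective hierarchy is built from). Thus $\mathcal P$ meets the hypothesis ``$\Gamma$ is closed under continuous pre-images.''

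With both hypotheses of Theorem 2.5 in place for $\Gamma = \mathcal P$, I would apply that theorem in the chosen model to conclude that $\mathbf{HD(\mathcal P)}$ holds and therefore every Menger projective set of reals is $\sigma$-compact. Since this conclusion holds in a model whose existence follows from the consistency of an inaccessible, the statement ``every projective Menger set of reals is $\sigma$-compact'' is itself consistent relative to an inaccessible, which is precisely the Corollary.

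I do not expect any genuine obstacle here: the entire content has been front-loaded into Theorems 2.5 and 2.7, and the only thing to supply is the elementary verification that the projective sets form a class closed under continuous pre-images. The one point deserving a word of care is bookkeeping about \emph{where} each statement is asserted to hold — Theorem 2.7 is a consistency statement, so the deduction via Theorem 2.5 must be carried out \emph{inside} the model, and the final Corollary is again a consistency statement at the same large-cardinal strength. Getting that quantifier structure right (consistency of the inaccessible $\Rightarrow$ consistency of $\mathbf{OGA^*(\mathcal P)}$ $\Rightarrow$, within that model, $\sigma$-compactness of Menger projective sets) is the only thing one must state precisely, and it is routine.
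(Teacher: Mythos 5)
Your proposal is correct and matches the paper's proof exactly: the paper likewise just cites the closure of the projective sets under continuous pre-images and combines Theorem 2.5 with Feng's consistency result. Your extra remarks on the quantifier bookkeeping are accurate but add nothing beyond what the paper leaves implicit.
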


 \begin{proof}  The family of projective sets of reals is closed under continuous pre-images \cite{Kech}.\qedhere
\end{proof} 
Let us also note for use elsewhere that:
\begin{thm} $OGA^*$(co-analytic) implies every co-analytic Menger set of reals is $\sigma$-compact.
\end{thm}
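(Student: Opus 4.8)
The plan is to derive this statement as a direct instance of Theorem~2.5. That theorem says that if $\mathbf{OGA^*(\Gamma)}$ holds and $\Gamma$ is closed under continuous pre-images, then $\mathbf{HD(\Gamma)}$ holds and therefore every Menger member of $\Gamma$ is $\sigma$-compact. I would apply it with $\Gamma$ taken to be the family of co-analytic subsets of $\mathbb{R}$. The hypothesis $\mathbf{OGA^*(\text{co-analytic})}$ is precisely what we are assuming, so the only thing left to check is that the co-analytic sets are closed under continuous pre-images. This mirrors exactly the role played by the analogous closure property for projective sets in the corollary preceding this theorem, the only difference being that here the axiom is assumed outright rather than forced to be consistent.

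The closure verification is standard and short. Let $f\colon X\to Y$ be continuous and let $A\subseteq Y$ be co-analytic, so that its complement $A^{c}$ is analytic. Since the analytic sets are closed under continuous pre-images, $f^{-1}(A^{c})$ is analytic; as $f^{-1}(A)=\bigl(f^{-1}(A^{c})\bigr)^{c}$, the set $f^{-1}(A)$ is co-analytic. With this in hand, Theorem~2.5 yields $\mathbf{HD(\text{co-analytic})}$ and hence the $\sigma$-compactness of every co-analytic Menger set.

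I do not expect a genuine obstacle here, but the one point worth confirming is that the class stays within $\Gamma$ throughout the construction inside the proof of Theorem~2.5: the axiom there is applied not to $A$ directly but to the vertex set $Y=\hat{A}\times\hat{B}$ assembled from $A$ and its co-complement $B$. One should note that closure under continuous pre-images, together with the standard facts that the co-analytic sets contain the closed sets and are closed under finite intersections and finite products, keeps each set manufactured in that argument — the pull-back $A^{*}=\pi^{-1}(A)$, the homeomorphic copies $\hat{A}$ and $\hat{B}$, and their product $Y$ — inside the co-analytic class. Since all of these operations preserve co-analyticity, the argument of Theorem~2.5 applies verbatim, and the theorem follows.
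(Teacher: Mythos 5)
Your proposal is correct and is essentially the paper's own argument: the paper's proof consists of the single observation that the co-analytic sets are closed under continuous pre-images, which is exactly the hypothesis needed to invoke Theorem 2.5 with $\Gamma$ the co-analytic class. Your additional remarks verifying the complementation argument and tracking the sets built inside the proof of Theorem 2.5 are reasonable elaborations of what the paper leaves implicit, but they do not change the route.
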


 \begin{proof} 
 The family of co-analytic sets of reals is closed under continuous pre-images \cite{Kech}.\qedhere
 \end{proof}

\section{OGA$^*$(projective) and CH}
It turns out that {\bf{OGA}} itself is insufficient to imply Menger's conjecture for projective sets. This follows from the following two facts.
\begin{thm}[~ \cite{T2}] With ground model L, force OGA and MA  via a finite support iteration of length $\omega_2$  of ccc posets of size $\aleph_1$. Then $ \mathfrak c=\mathfrak b=\aleph_2$ and hence $\mathfrak d=\aleph_2$. In this model $\omega_1=\omega_{1}^L$. But 
\end{thm}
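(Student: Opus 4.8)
The plan is to regard the assertion as having two parts: the forcing construction producing the model, which is the content attributed to \cite{T2}, and the concluding clause, which I read as the statement that in the resulting model there is a co-analytic, hence projective, Menger set of reals that is not $\sigma$-compact. I would take the iteration itself as given and concentrate on reading off the cardinal invariants and, above all, on exhibiting the co-analytic counterexample, since that is what makes the model relevant to the insufficiency of $\mathbf{OGA}$.

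First I would recall why the finite support iteration behaves as claimed. Over $L$ one iterates, with finite supports and length $\omega_2$, ccc posets each of size at most $\aleph_1$, the bookkeeping eventually listing every instance of the coloring poset that shoots an uncountable complete subgraph through a graph that is not countably chromatic, together with every ccc poset of size at most $\aleph_1$; as $\mathfrak c$ turns out to be $\aleph_2$, meeting all ccc posets of size at most $\aleph_1$ is exactly full $\mathbf{MA}$, while the coloring posets secure $\mathbf{OGA}$. A finite support iteration of ccc posets is ccc, so no cardinal is collapsed; in particular $\omega_1$ is preserved, and as $\omega_1$ is never collapsed over $L$ we obtain $\omega_1 = \omega_1^L$ in the extension. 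The limit poset $P_{\omega_2}$ has the $\aleph_2$-cc and size $\aleph_2$, forcing $\mathfrak c \le \aleph_2$, while reals are added cofinally often; with $\mathbf{MA}$ this yields $\mathfrak c = \aleph_2$ and $\mathfrak b = \mathfrak c = \aleph_2$, and then $\mathfrak b \le \mathfrak d \le \mathfrak c$ gives $\mathfrak d = \aleph_2$.

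The substantive step is the concluding clause, where I would use the two facts just recorded, $\omega_1 = \omega_1^L$ and $\mathfrak d = \aleph_2$. Because $\omega_1 = \omega_1^L$, a classical construction from the $\Sigma^1_2$-good wellordering of $\mathbb R \cap L$ produces an uncountable co-analytic set $T$ with no perfect subset (see \cite{Kech}); being thin, $T$ is contained in $L$, so $\lvert T\rvert \le \lvert \mathbb R \cap L\rvert = \aleph_1^L = \aleph_1$, whence $\lvert T\rvert = \aleph_1$. I would then verify two properties. First, $T$ is not $\sigma$-compact: any compact subset of $T$ again has no perfect subset, and a compact metrizable space with no perfect subset is countable, so every $\sigma$-compact subset of $T$ is countable whereas $T$ is uncountable. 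Second, $T$ is Menger: it is well known that a separable metrizable space all of whose continuous images in $\mathbb N^{\mathbb N}$ are non-dominating is Menger, so any set of reals of size below $\mathfrak d$ is Menger, its continuous images being too small to dominate, and $\lvert T\rvert = \aleph_1 < \aleph_2 = \mathfrak d$. Thus $T$ refutes Menger's conjecture for projective sets even though $\mathbf{OGA}$ holds; equivalently, by the theorem above that $\mathbf{OGA}^*(\text{co-analytic})$ implies every co-analytic Menger set of reals is $\sigma$-compact, $\mathbf{OGA}^*(\text{co-analytic})$ must fail in this model.

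I expect the real difficulty to lie entirely on the forcing side, namely in arranging the bookkeeping so that full $\mathbf{OGA}$ and full $\mathbf{MA}$ are obtained simultaneously while every iterand remains ccc and of size at most $\aleph_1$ and $\omega_1$ is provably preserved; this is precisely the part imported from \cite{T2}. On the descriptive side the only points needing care are that the thin co-analytic set survives into the extension with size $\aleph_1$, which is exactly what preservation of $\omega_1 = \omega_1^L$ guarantees, and that the Menger criterion is invoked with the correct critical cardinal $\mathfrak d$. Once $\mathfrak d = \aleph_2$ is in hand, the passage from ``size $\aleph_1$'' to ``Menger'' is automatic and, crucially, does not require $T$ to be unbounded — which it cannot be, since $\mathfrak b = \aleph_2$ forces every set of size $\aleph_1$ to be $\le^*$-bounded and hence to sit inside a $\sigma$-compact set, making the thinness of $T$ indispensable for non-$\sigma$-compactness.
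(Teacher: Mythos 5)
Your proposal is correct and follows essentially the same route as the paper: the forcing construction and the resulting values $\mathfrak c=\mathfrak b=\aleph_2$ (hence $\mathfrak d=\aleph_2$) and $\omega_1=\omega_1^L$ are simply imported from \cite{T2}, exactly as you do, and the concluding ``But'' is discharged in the paper by the immediately following theorem, which produces the co-analytic Menger non-$\sigma$-compact set from precisely the two ingredients you invoke --- $\omega_1=\omega_1^{L[a]}$ giving an uncountable co-analytic set of size $\aleph_1$ via \cite{Kan94}, and $\mathfrak d>\aleph_1$ giving Menger via \cite{Hur27}. Your derivation of non-$\sigma$-compactness from thinness (compact subsets of a set with no perfect subset are countable) is a harmless elaboration of the paper's ``such a set obviously cannot be $\sigma$-compact.''
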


\begin{thm}[~ \cite{TTo}]\label{tt}  $\omega_1=\omega_{1}^{L[a]}$  for some $a\subseteq\omega$, and $\mathfrak d>\aleph_1$ imply there is a co-analytic Menger set of reals of size $\aleph_1$.
\end{thm}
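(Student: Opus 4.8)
The plan is to split the statement according to the two hypotheses, which play completely different roles: $\mathfrak{d}>\aleph_1$ will supply the Menger property essentially for free, while $\omega_1=\omega_1^{L[a]}$ is precisely the condition needed to manufacture a co-analytic set of cardinality exactly $\aleph_1$. So the whole problem reduces to producing a single co-analytic set $X\subseteq\mathbb{R}$ with $|X|=\aleph_1$; once such an $X$ is in hand, I claim it is automatically both Menger and, in this setting, non-$\sigma$-compact, so that it actually refutes Menger's conjecture for co-analytic sets.

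For the Menger conclusion I would invoke Hurewicz's characterization: a separable metrizable space is Menger if and only if every continuous image of it in $\mathbb{N}^{\mathbb{N}}$ fails to be dominating (the analogous statement with ``bounded'' in place of ``dominating'' characterizes the Hurewicz property). Since a continuous image does not increase cardinality, and since every subset of $\mathbb{N}^{\mathbb{N}}$ of size less than $\mathfrak{d}$ is non-dominating by the very definition of $\mathfrak{d}$, any set $X$ with $|X|=\aleph_1<\mathfrak{d}$ has all of its continuous images non-dominating and is therefore Menger. Thus the Menger property requires nothing beyond $|X|=\aleph_1$ together with $\mathfrak{d}>\aleph_1$.

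For cardinality $\aleph_1$ and co-analyticity simultaneously, I would appeal to the classical effective construction going back to G\"odel (and to the theory of the largest thin $\Pi^1_1$ set due to Kechris, Guaspari, and Sacks): whenever $\omega_1^{L[a]}=\omega_1$ there is a $\Pi^1_1(a)$ set $X$ of reals with no perfect subset and with $|X|=\aleph_1$, obtained from the $\Sigma^1_2(a)$ good wellordering of $\mathbb{R}\cap L[a]$. Such an $X$ is co-analytic and of the required size. It is moreover not $\sigma$-compact: from $\mathfrak{d}\le\mathfrak{c}$ we get $\mathfrak{c}>\aleph_1$, and any $\sigma$-compact subset of a Polish space is a countable union of compact metrizable pieces, each of which is either countable or contains a Cantor set; hence every $\sigma$-compact set has cardinality at most $\aleph_0$ or at least $\mathfrak{c}$, and $|X|=\aleph_1$ lies strictly between. (Equivalently, the thinness of $X$ forbids an uncountable compact piece, which would again force $X$ to be countable were it $\sigma$-compact.) So $X$ is a co-analytic, Menger, non-$\sigma$-compact set of size $\aleph_1$, which is exactly what is needed to combine with the preceding theorem and show that \textbf{OGA} alone does not decide Menger's conjecture for projective sets.

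The only genuinely technical ingredient is the production of the thin $\Pi^1_1(a)$ set of size exactly $\aleph_1$, which is where the hypothesis $\omega_1=\omega_1^{L[a]}$ enters and which rests on effective descriptive set theory rather than on anything about covering properties; I expect this to be the main obstacle, in the sense of correctly citing (or reproving) the fact that the largest thin $\Pi^1_1(a)$ set has cardinality $\aleph_1$ precisely under this constructibility assumption, and checking that it carries the real parameter $a$. By contrast, the Menger property from $\mathfrak{d}>\aleph_1$ and the failure of $\sigma$-compactness from $\mathfrak{c}>\aleph_1$ are both immediate once the set is produced.
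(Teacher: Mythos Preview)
Your proposal is correct and follows essentially the same two-step decomposition as the paper's (very terse) proof: the hypothesis $\omega_1=\omega_1^{L[a]}$ is used to produce a thin $\Pi^1_1(a)$ set of size $\aleph_1$ (the paper cites \cite{Kan94} for this, you invoke the Kechris--Guaspari--Sacks largest thin $\Pi^1_1$ set, which amounts to the same thing), and $\mathfrak d>\aleph_1$ together with Hurewicz's characterization of Menger via non-dominating images yields the Menger property for any set of size $\aleph_1$ (the paper cites \cite{Hur27}). Your additional remarks on non-$\sigma$-compactness go slightly beyond the stated theorem but match the paper's one-line observation that ``such a set obviously cannot be $\sigma$-compact.''
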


\begin{proof} Such a set obviously cannot be $\sigma$-compact.
 The former hypothesis yields   ``co-analytic'' \cite{Kan94}, and the latter ``Menger'' \cite{Hur27}.
\end{proof}

\begin{thm}\label{14} If it is consistent that there is an inaccessible cardinal, it is consistent with CH that every Menger projective set of reals is $\sigma$-compact.

\end{thm}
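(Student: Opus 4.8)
The plan is to combine Theorem~\ref{14}'s two hypotheses---an inaccessible cardinal for consistency and the earlier $\mathbf{OGA^*}$ machinery---with a separate forcing argument that secures $\mathsf{CH}$. The key observation is that Corollary (the one following Theorem~\ref{14}'s inaccessible-cardinal version) already gives us ``every projective Menger set is $\sigma$-compact'' from the consistency of $\mathbf{OGA^*(\mathcal P)}$, but $\mathbf{OGA^*}$ is typically forced over a model where $\mathfrak c > \aleph_1$, so $\mathsf{CH}$ fails there. The main point of Theorem~\ref{14} is therefore to \emph{recover} $\mathsf{CH}$ while retaining the conclusion that projective Menger sets are $\sigma$-compact. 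So first I would isolate exactly which consequence of $\mathbf{OGA^*(\mathcal P)}$ is actually used: by Theorem~2.5 it is really $\mathbf{HD(\mathcal P)}$, the Hurewicz Dichotomy for projective sets, together with Theorem~2.1 giving that $\mathbf{HD}(\Gamma)$ yields $\sigma$-compactness of Menger $\Gamma$-sets.

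The heart of the argument is to show that $\mathbf{HD}$(projective), or equivalently the conclusion ``every projective Menger set of reals is $\sigma$-compact,'' is itself forceable together with $\mathsf{CH}$ from an inaccessible. The natural route is to work in Feng's model (Theorem 2.7) obtained from the inaccessible, and then force $\mathsf{CH}$ by a further collapse or by the Levy collapse structure already present, arguing that the relevant dichotomy for projective sets is a statement that, once true, is preserved. The cleaner strategy, which I expect the authors take, is to observe that projective absoluteness under the large-cardinal hypothesis makes $\mathbf{HD}$(projective) equivalent to a statement that holds in the \emph{inner} model reached after further forcing to $\mathsf{CH}$: one forces to make $\mathsf{CH}$ hold while keeping the large-cardinal-derived absoluteness that validates the Hurewicz Dichotomy on the projective hierarchy. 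First I would verify that the Hurewicz Dichotomy for projective sets follows from the existence of the required sharps or the inaccessible-level determinacy/absoluteness, independently of the size of the continuum; then $\mathsf{CH}$ can be imposed by a standard coding/collapse forcing that does not disturb the projective-set dichotomy.

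Concretely, the steps in order are: (1) from the consistency of an inaccessible, pass to a model in which the appropriate projective absoluteness (enough to run the Hurewicz Dichotomy argument for projective sets) holds; (2) force $\mathsf{CH}$ over that model by a forcing---for instance a suitable countable-support or collapsing iteration---chosen so that it preserves the projective statements needed, using that projective-set properties of the relevant complexity are absolute between the ground and extension under the large-cardinal hypothesis; (3) conclude that in the final model $\mathsf{CH}$ holds and $\mathbf{HD}$(projective) still holds, whence by Theorem~2.1 every projective Menger set of reals is $\sigma$-compact. The subtle point flagged by Theorems~3.1 and~\ref{tt} is that one cannot merely invoke $\mathbf{OGA}$: in the $\mathbf{OGA}+\mathsf{MA}$ model of Theorem~3.1 one has $\mathfrak d > \aleph_1$ while $\omega_1 = \omega_1^L$, and Theorem~\ref{tt} then manufactures a co-analytic Menger non-$\sigma$-compact set. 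Thus the forcing for $\mathsf{CH}$ must be carefully chosen to avoid reintroducing the configuration ``$\omega_1 = \omega_1^{L[a]}$ and $\mathfrak d > \aleph_1$''; under $\mathsf{CH}$ we have $\mathfrak d = \aleph_1$, so that obstruction dissolves automatically, which is precisely why $\mathsf{CH}$ is the right side condition to add.

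The step I expect to be the main obstacle is (2): arguing that the forcing imposing $\mathsf{CH}$ genuinely preserves $\mathbf{HD}$(projective). One must ensure that no new projective counterexample (a projective Menger non-$\sigma$-compact set) appears in the extension, and that the perfect-set structure witnessing the dichotomy survives. I would handle this by leaning on the large-cardinal-supplied absoluteness of projective truth---so that the dichotomy, being a projective statement quantifying over projective sets, transfers to the $\mathsf{CH}$ extension---rather than by re-forcing $\mathbf{OGA^*}$ directly, since $\mathbf{OGA^*}$ itself is not a projective statement and need not persist. Verifying that the chosen $\mathsf{CH}$-forcing lies within the scope of the available absoluteness, and that Menger-ness and $\sigma$-compactness of projective sets are themselves sufficiently absolute, is the technical crux.
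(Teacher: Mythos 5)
Your overall plan---start from a model where the conclusion holds and then force $\mathsf{CH}$ while preserving it---matches the shape of the paper's argument, but the mechanism you propose for the preservation step is not available and would not work. You lean on ``large-cardinal-supplied absoluteness of projective truth'' to transfer $\mathbf{HD}$(projective) (or the conclusion itself) to the $\mathsf{CH}$ extension. An inaccessible cardinal does not provide generic absoluteness for the projective hierarchy beyond Shoenfield ($\Sigma^1_2$); projective generic absoluteness of the kind you invoke requires Woodin cardinals, which would defeat the whole point of the paper (getting by with only an inaccessible rather than determinacy-strength hypotheses). Likewise, sharps are not available from an inaccessible. So step (2), which you yourself flag as the crux, has no justification from the stated hypotheses.

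The paper's actual argument sidesteps absoluteness entirely. It takes $V\models OGA^*(\text{projective})$ and collapses $2^{\aleph_0}$ to $\aleph_1$ by \emph{countably closed} forcing. Countably closed forcing adds no new reals, hence no new open sets of reals and no new countable sequences of open covers. Consequently a projective set $P$ in $V[G]$ is definable from a ground-model real and is literally the same set of reals as its interpretation in $V$; it is Menger in $V[G]$ iff it is Menger in $V$ (the candidate cover sequences and their finite selections all lie in $V$), and $\sigma$-compact in $V$ iff $\sigma$-compact in $V[G]$ (compactness of the witnesses is preserved). A counterexample in $V[G]$ would therefore be a counterexample in $V$, contradicting $OGA^*(\text{projective})$ there. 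This is an elementary, hands-on preservation argument requiring no projective absoluteness. (The paper also notes a second route: Feng's theorem gives $OGA^*(\text{projective})$ directly in the finite-condition collapse of an inaccessible to $\omega_1$, and $\mathsf{CH}$ holds there if one assumes $\mathsf{GCH}$ in the ground model, so no preservation argument is needed at all.) To repair your proof you should replace the absoluteness appeal with the ``no new reals, no new countable open covers'' analysis of a countably closed collapse.
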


\begin{proof} Let $V$ be a model of $OGA^*(projective)$ and let $V[G]$ be the result of collapsing $2^{\aleph_0}$ to $\aleph_1$ via countably closed forcing. Suppose in $V[G]$ there were a Menger projective non-$\sigma$-compact set of reals $P$. Note that there are no new reals in 
$V[G]$, and in fact no new open sets of reals. The first assertion is clear and well-known; for the second, the real line in the extension has the property that each open set is the union of countably many rational intervals, and there are no new such objects. Again, by countable closure, there are no new countable open covers. Since $P$ is projective, it is definable from a real $r$, which perforce is in $V$. The set of reals $r$ defines in $V$ is $P$, since $V[G]$ contains no new reals. Since $r$ codes a projective set,  $P$ is projective in $V$. If $P$ were $\sigma$-compact in $V$, say $P=\bigcup _{n\in\mathbb{N} } K_n$, then we claim $P$ would be $\sigma$-compact in $V[G]$. The reason is that $P$ is hereditarily Lindel\"of in $V[G]$, while countably closed forcing preserves countable compactness, so the compact witnesses for  a space's $\sigma$-compactness in $V$ would remain compact in $V[G$]. Finally, we claim $P$ is  also Menger and not $\sigma$-compact, contradicting $OGA^{*}(projective)$. If $P$ were Menger in $V[G]$, we claim $P$ would be Menger in $V$. Let $\{{\mathcal{U}}_n\}_{n\in\mathbb{N}}$ be a countable sequence of open covers of $P$ in $V$. Without loss of generality, we may assume each ${\mathcal{U}}_n$ is countable and a member of $V,$ and hence $\{{\mathcal U}_{n}\}_{n \in\mathbb{N }} \in V$. Let $\{\mathcal{V}_n\}_{n  \in\mathbb{N }}$ be a sequence of  finite subsets ${\mathcal{V}}_{n}\subseteq {\mathcal{U}}_{n}$ such that $\bigcup_{n\in\mathbb{N}} {\mathcal{V}}_n$ is a cover of $P$  in $V[G]$. Then each ${\mathcal{V}}_n$ and the sequence of ${\mathcal{V}}_n$'s are in $V$, and $\bigcup_{ n\in\mathbb{N}} {\mathcal{V}}_n$ covers $P$ there.\qedhere

\end{proof} 
 Another way of proving Theorem \ref{14} is to use Theorem 4.1 of \cite{Fe}, which asserts that $OGA^*(projective)$ holds in the model obtained by collapsing  an inaccessible to $\omega_1$
with finite conditions. CH can be arranged to hold in such a model, e.g. by assuming GCH in $V$; that shows $OGA^*(projective)$ does not imply $\mathbf{OGA}$.

\section{The inaccessible is necessary}
\begin{thm} \label{final}   The statement that every Menger co-analytic subset of $\mathbb R$ is $\sigma$-compact  is equiconsistent with the existence of an inaccessible cardinal.
\end{thm}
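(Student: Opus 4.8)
The plan is to establish the two consistency implications that together give equiconsistency. The forward implication---that an inaccessible suffices---is already available from Section 2 and requires no new work: over a model with an inaccessible cardinal, Feng's theorem yields $OGA^*(\mathcal P)$, hence $OGA^*(\text{co-analytic})$, and the theorem that $OGA^*(\text{co-analytic})$ implies that every co-analytic Menger set of reals is $\sigma$-compact finishes this direction (one may equally invoke the Corollary directly). Thus $\mathrm{Con}(\mathrm{ZFC}+\exists\,\text{inaccessible})$ yields $\mathrm{Con}$ of the displayed statement, and it remains to prove the converse.

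For the converse I would fix a model $M$ in which every Menger co-analytic set of reals is $\sigma$-compact and prove that $\omega_1^M$ is inaccessible in $L$; this gives $L\models\exists\,\text{inaccessible}$ and hence the desired consistency. Since $\omega_1$ is regular and a cardinal of $L$, and regularity is absolute to $L$, it is enough to show that $\omega_1$ is a limit cardinal in $L$, i.e.\ that $\omega_1$ is \emph{inaccessible to reals}: $\omega_1^{L[a]}<\omega_1$ for every real $a$. (If this failed, then $\omega_1=(\kappa^+)^L$ with $\kappa$ countable in $M$; coding $\kappa$ by a real $a$ forces $\omega_1^{L[a]}=\omega_1$.) So I would argue by contradiction, assuming some real $a$ satisfies $\omega_1^{L[a]}=\omega_1$, and aim to produce \emph{inside} $M$ a co-analytic Menger non-$\sigma$-compact set, contradicting the hypothesis on $M$.

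When $\mathfrak d>\aleph_1$ this is exactly Theorem \ref{tt}: from $\omega_1=\omega_1^{L[a]}$ one obtains a co-analytic set of size $\aleph_1$, which is non-$\sigma$-compact for free (a $\sigma$-compact set of size $\aleph_1<\mathfrak c$ would be a countable union of necessarily countable compacta) and Menger because each of its continuous images has size at most $\aleph_1<\mathfrak d$ and so is non-dominating. The main obstacle is the case $\mathfrak d=\aleph_1$, where this size argument collapses: a set of size $\aleph_1=\mathfrak d$ may have a dominating continuous image, so being Menger is no longer automatic. Here I would instead aim to build the witness with the \emph{Hurewicz} property (every continuous image into $\omega^\omega$ is bounded), which entails Menger; since $\mathfrak b\le\mathfrak d=\aleph_1$ we have $\mathfrak b=\aleph_1$, so a Hurewicz non-$\sigma$-compact set of size $\aleph_1$ can in principle be produced by an $\omega_1$-recursion that diagonalizes against prospective unbounded images. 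The hard part will be to run this recursion along the $\Sigma^1_2(a)$-good wellordering of $\mathbb R\cap L[a]$ furnished by $\omega_1=\omega_1^{L[a]}$ so that the resulting set is genuinely $\Pi^1_1$ while still controlling \emph{all} continuous images in $M$---in particular those coded by reals lying outside $L[a]$; making the diagonalization simultaneously definable and exhaustive is where I expect the real difficulty to lie. Once that is achieved, the witness contradicts the hypothesis on $M$ in this case as well, completing the proof that $\omega_1$ is inaccessible to reals and hence the equiconsistency.
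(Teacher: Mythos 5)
Your overall architecture is right: the forward consistency direction is exactly the paper's (Feng's theorem plus the $OGA^*(\text{co-analytic})$ result), the reduction to ``$\omega_1$ is inaccessible to reals'' is correct, and your treatment of the case $\mathfrak d>\aleph_1$ via Theorem \ref{tt} is exactly what the paper does. But the other case is not a technical loose end you can defer --- it is the entire content of the hard direction, and your proposed $\omega_1$-recursion ``diagonalizing against prospective unbounded images'' along a $\Sigma^1_2(a)$ wellordering is a gap, not a proof. You yourself identify the obstruction precisely: you would need to control continuous images coded by reals outside $L[a]$, and a recursion running inside $L[a]$ cannot see them. There is no known way to make that diagonalization both $\Pi^1_1$ and exhaustive, and the paper does not attempt it.

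What the paper does instead (Theorem \ref{mengerinacc}) avoids diagonalization entirely. From $\omega_1^{L[a]}=\omega_1$, Lemma \ref{key} gives a co-analytic set $A$ that is an $\omega_1$-scale in $(\mathbb{N}^\mathbb{N}\cap L[a],\le^*)$; it is not $\sigma$-compact (not even Borel) since a Borel well-founded relation on a Borel set has countable rank. If $A$ is Menger, you are done. If not, Hurewicz's dichotomy produces a continuous $f:A\to\mathbb{N}^\mathbb{N}$ with $\le^*$-cofinal range; extending $f$ to a Borel $g$ coded by a real $b$ (together with $a$) and using absoluteness of Borel evaluation, $\mathbb{N}^\mathbb{N}\cap L[b]$ is cofinal in \emph{all} of $(\mathbb{N}^\mathbb{N},\le^*)$. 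Reapplying Lemma \ref{key} to $b$ yields a co-analytic $\omega_1$-scale $B$ that is genuinely dominating, and then $X=B\cup D$, where $D$ is the countable set of functions eventually equal to $\infty$ in $(\mathbb{N}\cup\{\infty\})^\mathbb{N}$, is concentrated on $D$ and hence Menger (indeed Hurewicz) for a purely topological reason that requires no control over arbitrary continuous images; it is still not $\sigma$-compact. The key ideas you are missing are (i) that the \emph{failure} of the first candidate to be Menger is itself the source of the dominating family needed for the second candidate, and (ii) the concentrated-set trick that makes the second candidate Menger without any diagonalization. Without these, your case $\mathfrak d=\aleph_1$ remains unproved.
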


We actually have  a stronger version of the backward implication.  This follows from:
\begin{thm} [~ \cite{Fe}] The following are equivalent:
\item1.  $\omega_{1}^ { L[a] }<\omega_1$  for all $a\subseteq\omega$, 
\item 2. $OGA^*( \Sigma^1_2)$,
\item 3. $OGA^*( \Pi^1_1).$
\end{thm}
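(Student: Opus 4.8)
The plan is to prove the cycle of implications $(2)\Rightarrow(3)\Rightarrow(1)\Rightarrow(2)$. The first of these is immediate: every $\Pi^1_1$ set is $\Sigma^1_2$, so any open graph whose vertex set is co-analytic has in particular a $\Sigma^1_2$ vertex set, and $OGA^*(\Sigma^1_2)$ applied verbatim yields $OGA^*(\Pi^1_1)$. Thus the work lies in the remaining two implications, with $(1)\Rightarrow(2)$ being the substantive one.

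For $(3)\Rightarrow(1)$ I would first extract the perfect set property for $\Pi^1_1$ sets from $OGA^*(\Pi^1_1)$ by a trivial coloring. Given a $\Pi^1_1$ set $X$, consider the complete graph $G=(X,[X]^2)$; here $[X]^2=(X\times X)\setminus\Delta$ is open in $X\times X$ since the diagonal is closed, so $G$ is a genuine open graph with co-analytic vertex set. A decomposition witnessing countable chromaticity would split $X$ into pieces $X_n$ with $[X_n]^2=\emptyset$, i.e.\ into singletons, so $G$ is countably chromatic exactly when $X$ is countable; and a perfect complete subgraph of $G$ is nothing but a perfect subset of $X$. Hence $OGA^*(\Pi^1_1)$ forces every $\Pi^1_1$ set to be countable or to contain a perfect set. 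It is classical that this perfect set property for $\Pi^1_1$ sets is equivalent to $(1)$: were $(1)$ to fail, with $\omega_1^{L[a]}=\omega_1$ for some real $a$, the canonical uncountable thin $\Pi^1_1(a)$ set would violate it \cite{Kan94,Kech}. This yields $(1)$.

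For $(1)\Rightarrow(2)$, assume $\omega_1^{L[a]}<\omega_1$ for every real $a$, and let $G=(X,E)$ be an open graph with $X$ a $\Sigma^1_2(a)$ set that is not countably chromatic; I must produce a perfect complete subgraph. Fix the Shoenfield tree $\mathcal T\in L[a]$ on $\omega\times\omega_1$ with $X=p[\mathcal T]$, and write $E=\bigcup_k(U_k\times V_k)$ with the $U_k,V_k$ basic open, $\overline{U_k}\cap\overline{V_k}=\emptyset$, and $(U_k\cap X)\times(V_k\cap X)\subseteq E$. The idea is to run a Mansfield--Solovay splitting construction on $\mathcal T$ that simultaneously respects $E$: build a Cantor scheme of finite approximations, advancing both the $\omega$-coordinate and the $\omega_1$-witness coordinate, so that incomparable nodes are driven into a common rectangle $U_k\times V_k\subseteq E$. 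The branches then yield a perfect set of pairwise $E$-adjacent reals, all lying in $X=p[\mathcal T]$. The obstruction to splitting is encoded by a derivative that peels away nodes below which no two extensions can be $E$-separated; on such a residual piece $E$ is empty, so if the derivative reaches the empty tree one reads off a decomposition of $X$ into $E$-independent sets. The crucial use of $(1)$ is that this derivative is definable from $\mathcal T\in L[a]$, hence stabilizes at an ordinal computed in $L[a]$, which by $(1)$ lies below $\omega_1^{L[a]}<\omega_1$; so were it to terminate at $\emptyset$ it would do so in countably many steps and furnish a genuine countable chromatic decomposition, contradicting the hypothesis on $G$. Therefore the residual tree is nonempty and splits into $E$-separated extensions densely, the Cantor scheme goes through, and the perfect complete subgraph is obtained.

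I expect the main obstacle to be exactly this last direction, and within it the bookkeeping that fuses the two roles of $\mathcal T$: the $\omega_1$-coordinate must be driven upward so that every constructed branch genuinely stays inside $p[\mathcal T]$, while the real coordinate is split into the open rectangles witnessing $E$, and these two demands must be met together at cofinally many levels. The delicate points are to check that ``no $E$-splitting below a node'' really produces an $E$-independent (discrete) set, that the derivative rank is computed absolutely in $L[a]$ so that $(1)$ genuinely bounds its length below $\omega_1$, and that the splitting alternative is available whenever $G$ is not countably chromatic---this last being the precise open-graph analogue of the dichotomy underlying the classical perfect set property for $\Sigma^1_2$ sets.
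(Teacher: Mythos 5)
First, a point of reference: the paper does not prove this theorem at all --- it is quoted from Feng \cite{Fe} --- so your attempt must be judged against Feng's argument. Your implications $(2)\Rightarrow(3)$ (every $\Pi^1_1$ set is $\Sigma^1_2$) and $(3)\Rightarrow(1)$ (the complete graph $(X,[X]^2)$ turns $OGA^*(\Pi^1_1)$ into the perfect set property for $\Pi^1_1$ sets, which classically fails when $\omega_1^{L[a]}=\omega_1$ for some real $a$) are correct. The gap is in $(1)\Rightarrow(2)$, exactly at the point where you convert ``the derivative terminates at $\emptyset$'' into ``$G$ is countably chromatic.''

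There are two problems there, one in the stated inference and one structural. First, the claim that the derivative ``stabilizes at an ordinal computed in $L[a]$, which by (1) lies below $\omega_1^{L[a]}<\omega_1$'' is not a valid use of (1): hypothesis (1) says nothing about ordinals definable in $L[a]$ being below $\omega_1^{L[a]}$ (both $\omega_1^{L[a]}$ itself and $\omega_1^V$ are ``computed in $L[a]$''), and a monotone derivative on the Shoenfield tree, which has $\aleph_1$ nodes, may run for $\omega_1$ stages or more. Second, and more fundamentally, even granting termination at $\emptyset$, the decomposition you read off has one $E$-independent piece $p[(\mathcal T^{(\beta)})_u]$ for each pair (node $u$, stage $\beta$); since $\mathcal T$ lives on $\omega\times\omega_1$, this is a decomposition into $\aleph_1$ many pieces, not countably many. (If instead you shrink the tree to $\omega\times\omega_1^{L[a]}$ to make it countable, it no longer projects onto all of $X$: reals $x\in X$ may need witnesses with ordinals up to $\omega_1^{L[a,x]}$, and these are cofinal in $\omega_1$.) The missing idea --- the place where (1) actually enters, following the Mansfield--Solovay pattern --- is an envelope step: each piece $Y=p[(\mathcal T^{(\beta)})_u]$ is contained in the canonical $E$-independent set $X_F=\{x\in X:\forall n\ x\upharpoonright n\in F\}$, where $F=\{s\in\omega^{<\omega}: Y\cap N_s\neq\emptyset\}$ is the trace of $Y$ on basic open sets $N_s$. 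Openness of $E$ makes $X_F$ $E$-independent whenever $F$ contains no pair of sequences spanning a basic rectangle inside $E$, and --- crucially --- $F$ is a \emph{real in $L[a]$}, because ``$p[(\mathcal T^{(\beta)})_u]\cap N_s\neq\emptyset$'' is an illfoundedness statement about a tree in $L[a]$ and hence absolute. Now (1) is used in the form to which it is equivalent: $\omega_1^{L[a]}<\omega_1$ implies $\mathbb{R}\cap L[a]$ is countable, so there are only countably many distinct envelopes $X_F$, and these give the countable chromatic decomposition. Without this step your decomposition is too large and the implication does not close. (A structural simplification worth knowing: it suffices to prove $(1)\Rightarrow(3)$, since $OGA^*(\Pi^1_1)\Rightarrow OGA^*(\Sigma^1_2)$ follows by unfolding --- lift the coloring of $X=p_1(C)$, $C$ co-analytic in the plane, to the coloring of $[C]^2$ given by $\{(x,y),(x',y')\}\in K_0^*$ iff $x\neq x'$ and $\{x,x'\}\in K_0$ --- so the tree analysis never needs to be done at the $\Sigma^1_2$ level.)
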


  The essential point is the observation of Specker (and G\"odel) (see for details {~\cite[11.6]{Kan94}) that if every co-analytic set of reals includes a perfect set, then $\omega_1$ is inaccessible in L. Feng \cite{Fe} mentions the following fact which he credits to the second author.  The direct implication in Theorem~\ref{final}  follows from this.

\begin{thm}\label{mengerinacc}
If  $\omega_{1}^{L[a]}=\omega_1$ for some $a \in\mathbb{N}^{ \mathbb{N}}$, then there is a co-analytic set of reals
which is not $\sigma$-compact but has the Menger property.
\end{thm}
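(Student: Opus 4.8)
The plan is to combine Hurewicz's combinatorial characterization of the Menger property with the descriptive set theory of thin $\Pi^1_1$ sets. I will use three standard facts. First, Hurewicz's theorem: a separable metrizable $X$ is Menger if and only if every continuous image of $X$ in $\mathbb{N}^{\mathbb{N}}$ is non-dominating in the eventual-domination order $\leq^*$. Second, an uncountable \emph{thin} set (one with no perfect subset) is never $\sigma$-compact, since an uncountable $\sigma$-compact set is an uncountable Borel set and so contains a perfect subset. Third, if $X$ is uncountable and concentrated on a countable dense $Q\subseteq X$ (every open $U\supseteq Q$ satisfies $|X\setminus U|\le\aleph_0$), then $X$ is Menger --- cover $Q$ using finitely many members of $\mathcal{U}_n$ for $n$ in one infinite block of indices, then use the remaining blocks to pick up the countably many escaped points --- and $X$ is not $\sigma$-compact, because a concentrated set has no uncountable compact, hence no perfect, subset. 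Fix once and for all a real $a$ with $\omega_1^{L[a]}=\omega_1$, and let $C_1(a)$ be the largest thin $\Pi^1_1(a)$ set; under the hypothesis $C_1(a)\subseteq L[a]$ has cardinality $\aleph_1$, is co-analytic, and is not $\sigma$-compact.

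I would then split into two cases according to the reals of the relevant inner models. \textbf{Case I:} for every real $c$ the set $\mathbb{R}\cap L[a\oplus c]$ is $\leq^*$-bounded. Here $C_1(a)$ itself is the required example. Indeed, any continuous $f\colon C_1(a)\to\mathbb{N}^{\mathbb{N}}$ is coded by some real $c$, and since $C_1(a)\subseteq L[a]$ we get $f[C_1(a)]\subseteq \mathbb{R}\cap L[a\oplus c]$, which is $\leq^*$-bounded and hence non-dominating; by Hurewicz's characterization $C_1(a)$ is Menger (in fact Hurewicz). Combined with thinness this gives a co-analytic, Menger, non-$\sigma$-compact set.

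\textbf{Case II:} there is a real $c$ with $\mathbb{R}\cap L[b]$ $\leq^*$-unbounded, where $b=a\oplus c$; note $\omega_1^{L[b]}=\omega_1$ still holds. Working inside $L[b]$ and using its canonical $\Sigma^1_2(b)$-good wellordering, I would build by recursion of length $\omega_1$ a $\leq^*$-increasing sequence $D=\{f_\xi:\xi<\omega_1\}\subseteq L[b]$ that is cofinal in $(\mathbb{R}\cap L[b],\leq^*)$; since $\mathbb{R}\cap L[b]$ is unbounded in $V$, so is $D$. Fix a homeomorphism $\varphi$ of $\mathbb{N}^{\mathbb{N}}$ onto the irrationals of $(0,1)$ and set $X=\varphi[D]\cup\mathbb{Q}$. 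This $X$ is concentrated on $\mathbb{Q}$: if $U\supseteq\mathbb{Q}$ is open then $F=\mathbb{R}\setminus U$ is closed and disjoint from $\mathbb{Q}$, so it is bounded away from the rational endpoints $0,1$, whence $F\cap(0,1)$ is compact; its preimage under $\varphi$ is a compact, i.e. $\leq^*$-bounded, subset of $\mathbb{N}^{\mathbb{N}}$, which the unbounded $\leq^*$-increasing sequence $D$ can meet only in a countable initial segment. Thus $X\setminus U=\varphi[D]\cap F$ is countable, so $X$ is Menger and not $\sigma$-compact; and since $\varphi$ is a Borel isomorphism and $\mathbb{Q}$ is countable, $X$ is co-analytic as soon as $D$ is.

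The crux, and the one genuinely technical step, is making the set $D$ of Case II co-analytic rather than merely $\Sigma^1_2(b)$. This is the standard obstacle in producing $\Pi^1_1$ objects by transfinite recursion, and I would handle it by the G\"odel--Miller condensation method: arrange that $f_\xi$ is the $<_b$-least real entering at the correct level of the constructibility hierarchy, and decode ``$f\in D$'' through countable well-founded models of a fragment of ``$V=L[b]$'' containing $f$, so that membership becomes a statement quantifying universally over such real-coded models --- a $\Pi^1_1(b)$ condition. The hypothesis $\omega_1^{L[b]}=\omega_1$ is exactly what guarantees that the recursion exhausts $\mathbb{R}\cap L[b]$ in $\omega_1$ steps and that the coding is correct; the general machinery for turning such recursions into $\Pi^1_1$ sets under this hypothesis is available in the literature and can be invoked directly. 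Verifying simultaneously that the coded $D$ is $\leq^*$-increasing, cofinal in $\mathbb{R}\cap L[b]$, and $\Pi^1_1(b)$ is where the real work lies; the remaining ingredients --- Hurewicz's characterization, the properties of $C_1(a)$, and the two short lemmas on concentrated sets --- are routine.
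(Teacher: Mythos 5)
Your proposal is correct, and it has the same skeleton as the paper's proof (a dichotomy, a co-analytic $\omega_1$-scale obtained from $\omega_1^{L[\cdot]}=\omega_1$, the Hurewicz characterization of Menger via dominating continuous images, and a concentrated set in the unbounded case), but the two halves are organized differently. The paper works throughout with a single object: the co-analytic $\omega_1$-scale $A$ in $(\mathbb{N}^{\mathbb{N}}\cap L[a],\leq^*)$ given by Lemma \ref{key}, which is automatically non-$\sigma$-compact since a Borel well-founded relation on a Borel set has countable rank. Its dichotomy is simply ``$A$ is Menger or it is not''; in the latter case Hurewicz's theorem yields a continuous map with dominating range, hence a $b$ with $\mathbb{N}^{\mathbb{N}}\cap L[b]$ dominating, and the scale over $L[b]$ plus the countable set of eventually-$\infty$ functions is the concentrated example. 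You instead split on whether every $\mathbb{N}^{\mathbb{N}}\cap L[a\oplus c]$ is $\leq^*$-bounded, and in the bounded case you substitute the largest thin $\Pi^1_1(a)$ set $C_1(a)$ and verify Mengerness (indeed the Hurewicz property) directly from the characterization; this is a genuinely different witness and makes explicit \emph{why} the set is Menger, at the cost of importing one extra black box (that $C_1(a)\subseteq L[a]$ has cardinality $\aleph_1^{L[a]}$), which the paper avoids because its scale $A$ does double duty in both horns. Your unbounded case is the paper's second case transported to $[0,1]$ ($\varphi[D]\cup\mathbb{Q}$ versus $B\cup D$ inside $(\mathbb{N}\cup\{\infty\})^{\mathbb{N}}$), and you correctly observe that unboundedness of the well-ordered scale suffices for concentration, where the paper uses the stronger domination it happens to have. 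Finally, the step you flag as the crux --- producing the scale as a $\Pi^1_1$ set rather than merely $\Sigma^1_2$ --- is exactly the paper's Lemma \ref{key}, which is itself stated without proof and cited as a standard fact from Kanamori; so deferring it to the literature leaves your argument at the same level of completeness as the published one.
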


\begin{proof}

  We shall rely on the following version of a standard fact (see \cite{Kan94}, page 171).

\begin{lem}\label{key} Assume $\omega_{1}^{L[a]}=\omega_1$ for some $a \in\mathbb{N}^{ \mathbb{N}}$. Then $\mathbb{N}^{ \mathbb{N}}\cap L[a]$ ordered by the relation $\leq^*$
of eventual dominance  has a co-analytic  $\omega_1$-scale, i.e., a  cofinal subset $A$ which is well-ordered by $ \leq^*$ in order type $\omega_1.$
\end{lem}

Note that such a set $A$ is not $\sigma$-compact and in fact not Borel.  This follows from the standard fact that a Borel well-founded relation on a Borel set of reals has countable rank (see {~\cite[p.239]{Kech}}). If $A$  has the Menger property, then the 
proof of Theorem \ref{mengerinacc} is finished. Otherwise, by a theorem of Hurewicz \cite{Hur28} there is a continuous
mapping $f: A\rightarrow \mathbb{N}^\mathbb{N}$ with range  cofinal in $(\mathbb{N}^\mathbb{N}, \leq^*).$ 
The map $f$ extends to a continuous map on a $G_\delta$-superset of $A$. Then  there is a Borel map
$g:\mathbb{N}^\mathbb{N}\rightarrow \mathbb{N}^\mathbb{N}$ such that $g\upharpoonright A=f$ (see  Theorem 12.2 in \cite{Kech}). Let $b\in \mathbb{N}^\mathbb{N}$ code both $a$ and the map $g.$ Then $\mathbb{N}^{ \mathbb{N}}\cap L[b]$ is cofinal in   $(\mathbb{N}^\mathbb{N}, \leq^*).$  Applying the previous  Lemma we obtain a co-analytic $\omega_1$-scale $B$ in $(\mathbb{N}^\mathbb{N},\leq^*).$  Adding to $B$ the countable set $D$ of all maps from $(\mathbb{N}\cup\{\infty\})^\mathbb{N}$
 that are eventually equal to $\infty$ we obtain an uncountable co-analytic set of reals $X$ concentrated around $D$. It is easily seen that this set has the Menger property.  However, such a set $X$ cannot be $\sigma$-compact for the  same reason why the set $B$ is not $\sigma$-compact (see \cite{TsZ} for a general result in this direction). This finishes the proof of Theorem \ref{mengerinacc}.\qedhere
 
 \end{proof}

We note that the same argument proves a stronger direct implication of Theorem \ref{mengerinacc}. A space  $X$ is {\it Hurewicz} if for any sequence $\{{\mathcal U}_n\}_{n \in\mathbb{N} }$ of open covers of $X$ one may pick finite
sets $\mathcal V_n\subseteq \mathcal U_n$ in such a way that $\{\bigcup\mathcal V_n:n\in\mathbb{N}\}$ is a  $\gamma$-cover of $X$. An  infinite open cover $\mathcal{U}$ is a {\it$\gamma$-cover} if for each $x\in X$ the set $\{U\in {\mathcal{U}}: x\not\in U\}$ is finite. Note that Hurewicz implies Menger but not conversely \cite{CP,Ts}.

\begin{thm} \label{finalhur}   The statement that every Hurewicz co-analytic subset of $\mathbb R$ is $\sigma$-compact  
implies that $\omega_1$ is an inaccessible cardinal in L.
\end{thm}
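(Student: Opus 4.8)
The plan is to deduce Theorem~\ref{finalhur} from a strengthening of Theorem~\ref{mengerinacc}: namely, that the co-analytic, non-$\sigma$-compact set $X$ produced there is in fact \emph{Hurewicz}, not merely Menger. Granting this, the theorem follows by contraposition exactly as in the Menger case. Recall from Lemma~\ref{key} and the construction that, assuming $\omega_1^{L[a]}=\omega_1$ for some $a\subseteq\omega$, one obtains a co-analytic $\omega_1$-scale $B=\{f_\alpha:\alpha<\omega_1\}$ that is $\leq^{*}$-increasing and cofinal in $(\mathbb{N}^{\mathbb{N}},\leq^{*})$, and sets $X=B\cup D$ inside the compact space $(\mathbb{N}\cup\{\infty\})^{\mathbb{N}}$, where $D$ is the countable set of functions eventually equal to $\infty$. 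Since $B$ is $\leq^{*}$-increasing, unbounded, and of length $\omega_1$, it is a $\mathfrak{b}$-scale (so $\mathfrak{b}=\mathfrak{d}=\omega_1$), and, as already noted, $X$ is concentrated on $D$, i.e. $X\setminus U$ is countable for every open $U\supseteq D$.

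The heart of the matter is verifying that $X$ is Hurewicz. First I would record the easy reductions: $D$ is countable, hence $\sigma$-compact, hence Hurewicz; and, by concentration, for any open $U\supseteq D$ the remainder $X\setminus U$ is countable, hence again $\sigma$-compact. The genuine work is to turn a Menger-style selection into one yielding a $\gamma$-cover, and this is where the scale structure enters: every proper initial segment $\{f_\alpha:\alpha<\beta\}$ with $\beta<\omega_1=\mathfrak{b}$ is $\leq^{*}$-bounded, hence contained in a $\sigma$-compact set of the form $\mathbb{N}^{n}\times\prod_{k\geq n}[0,g(k)]$. Given a sequence $\{\mathcal{U}_n\}_{n\in\mathbb{N}}$ of open covers, I would split $\mathbb{N}$ into two infinite pieces, use the first to $\gamma$-cover $D$ (and, through the boundedness of initial segments, the tail of $B$ clustering at $D$), and the second to $\gamma$-cover the countable remainder outside the resulting neighborhood; the $\mathfrak{b}$-scale combinatorics that make the amalgamated selection a genuine $\gamma$-cover of all of $X$ are precisely those of Tsaban and Zdomskyy \cite{TsZ}, which already underlie the non-$\sigma$-compactness of $X$ invoked in the proof of Theorem~\ref{mengerinacc}. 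Equivalently, and perhaps more cleanly, I would appeal to Hurewicz's characterization \cite{Hur28} that a separable metrizable space is Hurewicz iff each of its continuous images in $\mathbb{N}^{\mathbb{N}}$ is $\leq^{*}$-bounded, and show that continuity at the points of $D$ together with concentration forces every continuous image of $X$ to be bounded.

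With the Hurewicz property of $X$ in hand, the remainder is the contrapositive argument. Suppose every Hurewicz co-analytic subset of $\mathbb{R}$ is $\sigma$-compact. If there were some $a\subseteq\omega$ with $\omega_1^{L[a]}=\omega_1$, the strengthened form of Theorem~\ref{mengerinacc} just described would produce a co-analytic, Hurewicz, non-$\sigma$-compact set, contradicting the hypothesis. Hence $\omega_1^{L[a]}<\omega_1$ for every $a\subseteq\omega$, which is exactly the assertion that $\omega_1$ is inaccessible in $L$ (the Specker--G\"odel observation, \cite[11.6]{Kan94}).

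The step I expect to be the main obstacle is the upgrade from Menger to Hurewicz, that is, converting the selection used in Theorem~\ref{mengerinacc} into one forming a genuine $\gamma$-cover. The delicate point is not the countable remainder $X\setminus U$ (which is trivially $\sigma$-compact) but the uncountable tail of the scale lying inside $U$: one must arrange that these $f_\alpha$ too are captured by all but finitely many of the selected unions. It is here that the scale having length exactly $\mathfrak{b}$---so that every initial segment is $\leq^{*}$-bounded, hence $\sigma$-compact---is indispensable, and this is exactly why I would lean on (or reprove, via the continuous-image characterization) the $\mathfrak{b}$-scale results of \cite{TsZ} rather than attempt a bare-hands selection. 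That concentration alone cannot suffice is shown by a Luzin set, which is concentrated on a countable set yet need not be Hurewicz.
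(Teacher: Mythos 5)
Your overall route is the paper's route (Theorem~\ref{hurinacc}: from $\omega_1^{L[a]}=\omega_1$ produce a co-analytic, non-$\sigma$-compact Hurewicz set, then conclude by the Specker--G\"odel observation), and on one point you are actually more careful than the paper: the assertion that ``concentrated sets have the Hurewicz property'' is false in general (Luzin sets), and the Hurewicz property of $X=B\cup D$ really does come from $B$ being an unbounded $\leq^*$-increasing $\omega_1$-sequence, i.e.\ the $\mathfrak b$-scale combinatorics of \cite{BaSh01,TsZ}. That repair is correct and worth making explicit.

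However, there is a genuine gap at the start: you assert that the hypothesis $\omega_1^{L[a]}=\omega_1$ yields a co-analytic $\omega_1$-scale $B$ that is cofinal (or at least unbounded) in $(\mathbb{N}^{\mathbb{N}},\leq^*)$, ``so $\mathfrak b=\mathfrak d=\omega_1$.'' This does not follow. Lemma~\ref{key} only gives a scale cofinal in $\mathbb{N}^{\mathbb{N}}\cap L[a]$, and it is consistent that $\omega_1^{L}=\omega_1$ while $\mathfrak b=\aleph_2$ (e.g.\ a length-$\omega_2$ finite support Hechler iteration over $L$); there every set of reals of size $\aleph_1$ is $\leq^*$-bounded, so no $\mathfrak b$-scale of length $\omega_1$ exists and your construction cannot be run. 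The paper's proof handles this with a dichotomy you have omitted: either the scale $A$ of Lemma~\ref{key} is already Hurewicz (and it is never $\sigma$-compact, being $\leq^*$-well-ordered in type $\omega_1$), in which case you are done; or, by the Hurewicz theorem of \cite{Hur25}, $A$ has a continuous image unbounded in $(\mathbb{N}^{\mathbb{N}},\leq^*)$, which (coding the Borel extension of that map by a real $b$) forces $\mathbb{N}^{\mathbb{N}}\cap L[b]$ to be unbounded; only then does a second application of Lemma~\ref{key} produce a genuinely unbounded co-analytic $\omega_1$-scale $B$, and only then does your $\mathfrak b$-scale argument for $X=B\cup D$ apply. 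Note also that the relevant dichotomy is the bounded/unbounded one from \cite{Hur25}, not the dominating one from \cite{Hur28} used for Menger; your appeal to ``the construction'' of Theorem~\ref{mengerinacc} conflates the two, and the case in which that construction terminates with a Menger (but possibly non-Hurewicz) $A$ is exactly the case your proposal leaves uncovered.
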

  
 As before it suffices to prove the following more precise statement. 

\begin{thm}\label{hurinacc}
If  $\omega_{1}^{L[a]}=\omega_1$ for some $a \in\mathbb{N}^{ \mathbb{N}}$ then there is a co-analytic set of reals
which is not $\sigma$-compact but has the Hurewicz property.
\end{thm}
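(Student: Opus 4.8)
The plan is to follow the proof of Theorem~\ref{mengerinacc} almost verbatim, changing only ``Menger'' to ``Hurewicz'' and the combinatorial input used at the dichotomy step. From $\omega_1^{L[a]}=\omega_1$, Lemma~\ref{key} again produces a co-analytic $\omega_1$-scale $A$, cofinal in $(\mathbb{N}^\mathbb{N}\cap L[a],\le^*)$, which is not $\sigma$-compact (indeed not Borel, by the same bound on the rank of Borel well-founded relations invoked above). I would then split into the same two cases. If $A$ is itself Hurewicz we are finished, since $A$ is co-analytic and not $\sigma$-compact. Otherwise $A$ is not Hurewicz, and here I replace the Menger ingredient by the parallel theorem of Hurewicz~\cite{Hur28}: a separable metrizable space fails to be Hurewicz precisely when it carries a continuous map into $\mathbb{N}^\mathbb{N}$ with $\le^*$-\emph{unbounded} image. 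This yields a continuous $f\colon A\to\mathbb{N}^\mathbb{N}$ with $f[A]$ unbounded; the sole difference from Theorem~\ref{mengerinacc} is that the image is now merely unbounded rather than dominating.

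The coding step is then unchanged: extend $f$ to a continuous map on a $G_\delta$-superset of $A$, then to a Borel $g\colon\mathbb{N}^\mathbb{N}\to\mathbb{N}^\mathbb{N}$ with $g\upharpoonright A=f$ (Theorem~12.2 of \cite{Kech}), and let $b\in\mathbb{N}^\mathbb{N}$ code both $a$ and $g$. Since $A\subseteq L[a]\subseteq L[b]$ and $g\in L[b]$, the set $g[A]=f[A]$ belongs to $L[b]$ and is unbounded, so $\mathbb{N}^\mathbb{N}\cap L[b]$ is $\le^*$-unbounded. Applying Lemma~\ref{key} to $b$ gives a co-analytic $\omega_1$-scale $B$ cofinal in $(\mathbb{N}^\mathbb{N}\cap L[b],\le^*)$; cofinality in an unbounded set forces $B$ to be unbounded, so $B$ is a $\le^*$-increasing unbounded sequence of length $\omega_1$, that is, a $\mathfrak{b}$-scale. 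As before I would form $X=B\cup D$ inside $(\mathbb{N}\cup\{\infty\})^\mathbb{N}$, with $D$ the countable set of maps eventually equal to $\infty$. Then $X$ is co-analytic and uncountable, and it is not $\sigma$-compact for the reason given earlier: were $X$ Borel, then so would be $B=X\cap\mathbb{N}^\mathbb{N}$, contradicting that $\le^*$ well-orders $B$ in the uncountable order type $\omega_1$.

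The one genuinely new point, and the step I expect to require the most care, is verifying that $X=B\cup D$ is Hurewicz. Here concentration around $D$ --- which was all that the Menger conclusion needed --- does not suffice, because continuity constrains only finitely many coordinates at a time, so a neighbourhood of $D$ places no $\le^*$-bound on a continuous image. Instead I would appeal to the analysis of Tsaban and Zdomskyy~\cite{TsZ}: for a $\mathfrak{b}$-scale $B$, the increasing and unbounded structure of minimal length $\mathfrak{b}=\omega_1$ forces every continuous image of $B\cup D$ in $\mathbb{N}^\mathbb{N}$ to be $\le^*$-bounded, which by Hurewicz's theorem is equivalent to the Hurewicz property. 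Concretely, given a sequence of open covers one first handles the countable set $D$ and then exploits the unboundedness of the scale to select finite subfamilies whose unions form a $\gamma$-cover, the minimality of the scale's length being exactly what prevents a diagonal point of $B$ from escaping every such selection. This $\mathfrak{b}$-scale computation is where the argument diverges from the Menger case, and it carries the whole weight of the stronger conclusion.
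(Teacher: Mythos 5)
Your proposal is correct and follows essentially the same route as the paper: the same co-analytic scale $A$ from Lemma~\ref{key}, the same dichotomy via Hurewicz's unboundedness criterion, the same Borel extension and coding of $g$ into $b$, and the same final set $X=B\cup D$. The only divergence is at the last step, where the paper simply asserts that concentrated sets have the Hurewicz property (an overstatement in general --- Luzin sets are concentrated but not Hurewicz), whereas you correctly locate the needed input in the $\mathfrak{b}$-scale structure of $B$ and the Bartoszy\'nski--Shelah/Tsaban--Zdomskyy analysis; your treatment of that step is the more careful one.
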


\begin{proof}
We follow the proof of Theorem \ref{mengerinacc} very closely. Let $A$ be the co-analytic set given by Lemma \ref{key}. We know that  $A$ is not $\sigma$-compact and in fact not Borel.   If $A$  has the Hurewicz property then the 
proof of Theorem \ref{hurinacc} is finished. Otherwise, by another theorem of Hurewicz \cite{Hur25} there is a continuous
mapping $f: A\rightarrow \mathbb{N}^\mathbb{N}$ whose range is unbounded in $(\mathbb{N}^\mathbb{N}, \leq^*).$ 
The map $f$ extends to a continuous map on a $G_\delta$-superset of $A$. So   there is a Borel map
$g:\mathbb{N}^\mathbb{N}\rightarrow \mathbb{N}^\mathbb{N}$ such that $g\upharpoonright A=f.$ Let $b\in \mathbb{N}^\mathbb{N}$ code both $a$ and the map $g.$ Then $\mathbb{N}^{ \mathbb{N}}\cap\, L[b]$ is unbounded in   $(\mathbb{N}^\mathbb{N}, \leq^*).$  Applying  Lemma \ref{key} again, we obtain a co-analytic $\omega_1$-scale $B$ in $(\mathbb{N}^\mathbb{N},\leq^*).$  Adding to $B$ the countable set $D$ of all maps from $(\mathbb{N}\cup\{\infty\})^\mathbb{N}$
 that are eventually equal to $\infty$, we obtain an uncountable co-analytic set of reals $X$ concentrated around $D$. It is easily seen that concentrated sets have the Hurewicz property.  As before, such a set $X$ cannot be $\sigma$-compact,  so the proof  of Theorem \ref{hurinacc} is finished.
\end{proof}

In conclusion, let us thank the referee for many improvements.

{\rm Franklin D. Tall, Department of Mathematics, University of Toronto, \\Toronto, Ontario M5S 2E4, CANADA}\\
{\it e-mail address:} {\rm tall@math.utoronto.ca}\\

{\rm Stevo Todorcevic, Department of Mathematics, University of Toronto, \\Toronto, Ontario M5S 2E4, CANADA}\\
{\it e-mail address:} {\rm stevo@math.utoronto.ca}\\

{\rm Se{\c{c}}il Tokg\"oz, Department of Mathematics, Hacettepe University,\\ Beytepe, 06800, Ankara, TURKEY}\\
{\it e-mail address:} {\rm secilc@gmail.com}

\end{document}